\newcounter{minutes}\setcounter{minutes}{\time}
\newcounter{hours}\setcounter{hours}{\time}
\newcommand{\K}{\mathcal{K}}
\newcommand{\E}{\mathcal{E}}
\newcommand{\R}{\mathbb{R}}
\newcommand{\HH}{\mathbb{H}}
\newcommand{\C}{\mathbb{C}}
\newcommand{\B}{\mathbb{B}}
\newcommand{\beq}{\begin{equation}}
\newcommand{\eeq}{\end{equation}}
\DeclareMathOperator{\ch}{ch}
\DeclareMathOperator{\sh}{sh}
\DeclareMathOperator{\arctanh}{arth}
\date{}
\title{\bf  On exterior moduli of quadrilaterals and special functions}
\author{Matti Vuorinen}
\author{Xiaohui Zhang }
\address{Department of Mathematics and Statistics, University of Turku, 20014 Turku,
Finland} \email{vuorinen@utu.fi, xiazha@utu.fi}
\newtheorem{theorem}[equation]{Theorem}
\newtheorem{lemma}[equation]{Lemma}
\newtheorem{corollary}[equation]{Corollary}
\newtheorem{openprob}[equation]{Open problem}
\numberwithin{equation}{section}
\theoremstyle{remark}
\newtheorem{remark}[equation]{Remark}
\newtheorem{nonsec}[equation]{}
\begin{document}

\begin{abstract}
In this paper two identities involving a function defined by the complete elliptic integrals of the first and second kinds are proved. Some functional inequalities and elementary estimates for this function are also derived from the properties of monotonicity and convexity of this function. As applications, some functional inequalities and the growth of the exterior modulus of a rectangle are studied.
\end{abstract}

\def\thefootnote{}
\footnotetext{ \texttt{\tiny File:~\jobname .tex,
          printed: \number\year-\number\month-\number\day,
          \thehours.\ifnum\theminutes<10{0}\fi\theminutes}
} \makeatletter\def\thefootnote{\@arabic\c@footnote}\makeatother


\maketitle

{\small \sc Keywords.}{ Exterior modulus, complete elliptic integral, inequality}

{\small \sc 2010 Mathematics Subject Classification.} {33E05, 31A15}

\medskip

\section{Introduction}
\begin{nonsec}{\bf Exterior modulus of a quadrilateral.}\,
For $h>0$ consider the rectangle $D$ with vertices $1+ih$, $ih$, $0$, $1$ in the upper half plane $\HH^2=\{x+iy:y>0\}$ and a bounded harmonic function $u:\C\setminus D\to\R$ satisfying the Dirichlet-Neumann boundary value problem $u(z)=0$ for $z\in[0,1]$, $u(z)=1$ for $z\in[ih,1+ih]$, $\frac{\partial u}{\partial n}(z)=0$ for $z\in[1,1+ih]\cup[0,ih]$ where $n$ is the direction of the exterior normal to $\partial D$. The number
$$
  \mathcal{M}(1+ih,ih,0,1)=\int_{\C\setminus D}|\nabla u|^2dm
$$
is called the exterior modulus of the rectangle $D(1+ih,ih,0,1)$.

This quantity also has an interpretation as the modulus of the family of all curves, joining the segments $[1+ih,ih]$ and $[0,1]$ in the complement of the rectangle $D$, which also is equal to $\mathcal{M}(1+ih,ih,0,1)$ (cf. \cite{ah}).
For a polygonal quadrilateral $D(a,b,0,1)$ with vertices $a,b\in\HH^2$ and base $[0,1]$, the exterior modulus $\mathcal{M}(a,b,0,1)$ can be defined in the same way.

As far as we know there is no analytic formula for $\mathcal{M}(a,b,0,1)$. Numerical methods for the computation of $\mathcal{M}(a,b,0,1)$ were recently studied by H. Hakula, A. Rasila, and M. Vuorinen in \cite{hrv} which motivate the present study. They used numerical methods such as hp-FEM and the Schwarz-Christoffel mapping. Similar problems for the interior modulus have been studied in \cite{hqr, hrv1}. The literature and software dealing with numerical conformal mapping problems are very wide, see e.g. \cite{dt, ps}.

Here we study the above problem for the case of a rectangle. In this case an explicit formula involving complete elliptic integrals was given by P. Duren and J. Pfaltzgraff \cite{dp}, and our goal is to analytically study the dependence of the formula on $h$.
\end{nonsec}

\begin{nonsec}{\bf Complete elliptic integrals.}\,
Let $\K(r)$ and $\E(r)$ stand for the complete elliptic integrals of the first and second kind, respectively (see (\ref{ellint})). Let $r'=\sqrt{1-r^2}$ for $r\in(0,1)$. We often denote
$\K'(r)=\K(r'),\quad \E'(r)=\E(r')$. Define the function $\psi$ as follows
\beq\label{psi}
\psi(r)=\frac{2(\E(r)-(1-r)\K(r))}{\E'(r)-r\K'(r)},\quad r\in(0,1).
\eeq
The function $\psi:(0,1)\to(0,\infty)$ is a homeomorphism, see Theorem \ref{mythm1} or \cite{dp}. In particular, $\psi^{-1}:(0,\infty)\to(0,1)$ is well-defined.
\end{nonsec}

\begin{nonsec}{\bf Duren-Pfaltzgraff formula for a rectangle.}\,
In \cite{dp}, P. Duren and J. Pfaltzgraff studied the modulus $\mathcal{M}(\Gamma)$ of the family of curves $\Gamma$ joining the opposite sides of length $b$ of the rectangle with sides $a$ and $b$, in the exterior of the rectangle, and gave the formula \cite[Theorem 5]{dp}
\beq\label{dpformula}
\mathcal{M}(\Gamma)=\dfrac{\K'(r)}{2\K(r)},\quad \mbox{where}\quad r=\psi^{-1}(a/b).
\eeq
The exterior modulus $\mathcal{M}(\Gamma)$ is a conformal invariant of a quadrilateral. In \cite{adv}, the authors gave a sharp comparison between the function $\psi$ and Robin modulus of a given rectangle. Their result can be rewritten as the following inequality
\beq\label{advinequal}
\dfrac{\pi r}{(1-r)^2}<\psi(r)<\dfrac{16r}{\pi(1-r)^2},\quad r\in(0,1).
\eeq
\end{nonsec}

In this paper two identities involving the function $\psi$ are proved, and some functional inequalities and elementary estimates for the function $\psi$ are also derived from the monotonicity and convexity of the combinations of the function $\psi$ and some elementary functions. As applications, we will study the growth of the exterior modulus with respect to the length of one side of the rectangle. The main results are listed as follows.

\begin{theorem} \label{myidentity}
For $r\in(0,1)$, the function $\psi$ satisfies the identities
$$\psi(r^2)\psi\left(\left(\dfrac{1-r}{1+r}\right)^2\right)=1,\quad \psi\left(\dfrac{1-r}{1+r}\right)\psi\left(\dfrac{1-r'}{1+r'}\right)=1.$$
\end{theorem}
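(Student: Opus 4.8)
The plan is to reduce both identities to a single transformation formula for $\psi$ under the involution $g(s)=\frac{1-s}{1+s}$ of $(0,1)$. Concretely, I aim to prove that for every $s\in(0,1)$
$$\psi\!\left(\frac{1-s}{1+s}\right)=\frac{P(s')}{P(s)},\qquad P(s):=\E(s)-(s')^2\K(s),$$
where $P(s)>0$ since $P(s)=s(s')^2\,\frac{d\K}{ds}$ and $\K$ is increasing. Granting this, the whole theorem is the single reciprocity $\psi(g(s))\,\psi(g(s'))=1$: indeed, replacing $s$ by $r$ and by $r'$ and using $(r')'=r$ gives $\psi(g(r))=P(r')/P(r)$ and $\psi(g(r'))=P(r)/P(r')$, whose product is $1$; this is precisely the second stated identity, since $g(r)=\frac{1-r}{1+r}$ and $g(r')=\frac{1-r'}{1+r'}$.

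The key formula itself I would extract from the Landen transformations. The point is that $g(s)=\frac{1-s}{1+s}$ is the \emph{descending} Landen transform of $s'$, while its complementary modulus $g(s)'=\frac{2\sqrt s}{1+s}$ is the \emph{ascending} Landen transform of $s$. Feeding in the descending formulas $\K(g(s))=\frac{1+s}{2}\K(s')$ and $\E(g(s))=\frac{\E(s')+s\K(s')}{1+s}$, together with $1-g(s)=\frac{2s}{1+s}$, the numerator of $\psi(g(s))$ collapses:
$$\E(g(s))-(1-g(s))\K(g(s))=\frac{\E(s')-s^2\K(s')}{1+s}=\frac{P(s')}{1+s}.$$
Dually, feeding in the ascending formulas $\K'(g(s))=(1+s)\K(s)$ and $\E'(g(s))=\frac{2\E(s)-(s')^2\K(s)}{1+s}$, the denominator collapses, using $(s')^2=1-s^2$, to
$$\E'(g(s))-g(s)\K'(g(s))=\frac{2\big[\E(s)-(1-s^2)\K(s)\big]}{1+s}=\frac{2P(s)}{1+s}.$$
Dividing (and accounting for the factor $2$ in the definition of $\psi$) yields $\psi(g(s))=P(s')/P(s)$.

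To finish the first identity I would specialize the reciprocity at the right modulus. Taking $s=\frac{2r}{1+r^2}$, a direct computation gives $s'=\frac{1-r^2}{1+r^2}$ and hence
$$g(s)=\left(\frac{1-r}{1+r}\right)^2,\qquad g(s')=r^2,$$
so that $\psi(g(s))\,\psi(g(s'))=1$ reads exactly $\psi\!\big((\tfrac{1-r}{1+r})^2\big)\,\psi(r^2)=1$. The second identity is the even simpler case $s=r$.

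The step I expect to be the main obstacle is the clean derivation of the transformation formula: one must invoke both the ascending and the descending Landen transformations and keep straight which one computes $\K(g(s)),\E(g(s))$ and which computes the complementary pair $\K'(g(s)),\E'(g(s))$. The two simplifications producing $P(s')$ and $2P(s)$ hinge on the exact Landen coefficients and on $(s')^2=1-s^2$, so a single misplaced coefficient would destroy the cancellation; by contrast, the algebraic specializations $s=r$ and $s=\frac{2r}{1+r^2}$ (checking $g(s)$ and $g(s')$) are routine.
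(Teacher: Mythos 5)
Your proof is correct, and it takes a genuinely different route from the paper's, although both rest on Landen's transformations. The paper first derives a ``doubled'' set of transformation formulas (its Lemma \ref{mytransf}, expressing $\K(t^2),\K'(t^2),\E(t^2),\E'(t^2)$ in terms of the modulus $r^2$, with $t=(1-r)/(1+r)$, obtained by composing two Landen steps), then verifies the first identity $\psi(r^2)\psi(t^2)=1$ by direct computation, and finally deduces the second identity from the first via the substitution $r\mapsto\sqrt{(1-r)/(1+r)}$. You go in the opposite direction: a single application of each of the four basic Landen formulas (\ref{transfk1})--(\ref{transfe2}) yields the structural formula $\psi\bigl(\tfrac{1-s}{1+s}\bigr)=P(s')/P(s)$ with $P(s)=\E(s)-(s')^2\K(s)$, from which the reciprocity $\psi(g(s))\psi(g(s'))=1$ is a tautology; the second identity is then the case $s=r$, and the first follows by the specialization $s=2r/(1+r^2)$ (your computations of $s'$, $g(s)$ and $g(s')$ check out, as do the two cancellations producing $P(s')/(1+s)$ and $2P(s)/(1+s)$, and the positivity of $P$ via $P(s)=s(s')^2\,d\K/ds$, which matches the paper's derivative formulas (\ref{derivative}) and Lemma \ref{lemma}(1)). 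What your approach buys is economy and transparency: it dispenses entirely with the composite transformations of Lemma \ref{mytransf} (whose coefficients such as $r+r^2+r^3$ and $3-2r^2-r^4$ must be tracked exactly), and it isolates an intermediate formula of independent interest that explains \emph{why} the reciprocity holds, rather than exhibiting it as the outcome of an algebraic cancellation. What the paper's route buys is that the identity for squared moduli is obtained directly, in the form that parallels the known identity $\mu(r^2)\,\mu\bigl(((1-r)/(1+r))^2\bigr)=\pi^2$ for the Gr\"otzsch modulus cited in the subsequent Remark.
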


\begin{theorem} \label{mythm2}
The function $f(r)=(1-\sqrt{r})^2\psi(r)/r$ is strictly decreasing from $(0,1)$ onto $(4/\pi,\pi)$. In particular, for all $r\in(0,1)$
$$\dfrac{4r}{\pi(1-\sqrt{r})^2}<\psi(r)<\dfrac{\pi r}{(1-\sqrt{r})^2}.$$
\end{theorem}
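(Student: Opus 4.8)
The plan is to deduce the two-sided inequality as an immediate consequence of the claimed monotonicity and range: once $f$ is shown to be a strictly decreasing homeomorphism of $(0,1)$ onto $(4/\pi,\pi)$, the bounds $4/\pi<f(r)<\pi$ rearrange at once into the stated estimates for $\psi$. So the proof reduces to two tasks, identifying the two endpoint limits and establishing strict monotonicity.

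For the endpoints I would first record the normalization $\lim_{r\to0^+}\psi(r)/r=\pi$. This follows from the Maclaurin expansions of $\E$ and $\K$ at the origin: the numerator $\E(r)-(1-r)\K(r)$ behaves like $\tfrac\pi2 r$, while the denominator $\E'(r)-r\K'(r)\to\E(1)=1$ because $r\K'(r)\to0$. Consequently $f(0^+)=\lim(1-\sqrt r)^2(\psi(r)/r)=\pi$. For the limit at $1$ I would invoke the first identity of Theorem \ref{myidentity}: writing $\rho=r^2$ one has $\psi(\rho)=1/\psi\big((\tfrac{1-r}{1+r})^2\big)$, and since $(\tfrac{1-r}{1+r})^2\to0$ as $r\to1$, the normalization above gives $\psi(\rho)\sim\tfrac1\pi\big(\tfrac{1+r}{1-r}\big)^2$; multiplying by $(1-\sqrt\rho)^2/\rho=(1-r)^2/r^2$ yields $f(1^-)=4/\pi$.

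For monotonicity I would work with the logarithmic derivative. Setting $N=\E-(1-r)\K$ and $D=\E'-r\K'$, so that $\psi=2N/D$, the differentiation formulas for $\E,\K$ collapse to the clean identities $N'(r)=\E(r)/(1+r)$ and $D'(r)=(\E'(r)-\K'(r))/(1+r)$. Since moreover $-\tfrac1{\sqrt r(1-\sqrt r)}-\tfrac1r=-\tfrac1{r(1-\sqrt r)}$, one finds
\[\frac{d}{dr}\log f(r)=\frac{1}{1+r}\left(\frac{\E}{N}+\frac{\K'-\E'}{D}\right)-\frac{1}{r(1-\sqrt r)},\]
so that $f$ is strictly decreasing if and only if
\[\frac{\E(r)}{\E(r)-(1-r)\K(r)}+\frac{\K'(r)-\E'(r)}{\E'(r)-r\K'(r)}<\frac{1+r}{r(1-\sqrt r)}.\]

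This last inequality is the heart of the matter and the step I expect to be genuinely hard, since both terms on the left are ratios of elliptic-integral combinations and the right-hand side carries the awkward $\sqrt r$. My approach would be to bound the two fractions separately by elementary functions, using the monotone form of l'Hôpital's rule together with standard monotonicity and convexity properties of $\E,\K,\E',\K'$ (for instance sharp bounds of the type recorded in (\ref{advinequal})), and then verify the resulting elementary inequality. A reduction I would exploit to lighten this analysis is the symmetry hidden in Theorem \ref{myidentity}: in the variable $t=\sqrt r$ the function $F(t)=f(t^2)$ satisfies $F(t)\,F\big(\tfrac{1-t}{1+t}\big)=4$, and differentiating this relation shows that $F'$ has the same sign at $t$ and at $\tfrac{1-t}{1+t}$. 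Hence it suffices to establish the displayed inequality on the half-interval $r\in(0,3-2\sqrt2\,]$ (that is $t\le\sqrt2-1$), where the elliptic integrals are more readily estimated, the monotonicity on the complementary interval then being automatic. Finally, the strict monotonicity together with the two boundary values $\pi$ and $4/\pi$ identifies the range and delivers the asserted inequalities.
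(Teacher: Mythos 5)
Your endpoint computations are fine (the normalization $\psi(r)/r\to\pi$ at $0$, and the use of Theorem \ref{myidentity} to force $f(1^-)=4/\pi$), and the symmetry $F(t)F\bigl(\tfrac{1-t}{1+t}\bigr)=4$ with its consequence for the sign of $F'$ is a correct and genuinely nice observation. But the proof has a real gap at exactly the point you flag yourself: strict monotonicity — the entire content of the theorem — is never established. You correctly reduce it to the inequality
$$\frac{\E}{\E-(1-r)\K}+\frac{\K'-\E'}{\E'-r\K'}<\frac{1+r}{r(1-\sqrt r)},$$
call it ``the heart of the matter,'' and then only describe an intention: bound each elliptic ratio by elementary functions sharp enough that the resulting elementary inequality can be checked. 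No such bounds are produced, and it is far from clear that this can be done term by term, since separate estimates discard the interplay between the two fractions and the inequality must hold on all of $(0,3-2\sqrt2\,]$ even after your half-interval reduction. As written, the main claim is assumed, not proved.

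For comparison, the paper's proof needs no differentiation of $\psi$ at all: it factors
$$f(r)=2\,\frac{\E-(1-r)\K}{r}\cdot\frac{(1-\sqrt r)^2}{\E'-r\K'},$$
notes that the first factor is strictly decreasing by Lemma \ref{mylemma}(2), and that the second factor equals $1/f_5(r')$ with $f_5$ as in Lemma \ref{mylemma}(5), hence is positive and strictly decreasing as well; the product of positive strictly decreasing functions is strictly decreasing, and the same two lemmas give the limits $2\cdot\frac{\pi}{2}\cdot1=\pi$ and $2\cdot1\cdot\frac{2}{\pi}=4/\pi$. Note that this factorization is precisely a smarter regrouping of your logarithmic derivative: instead of collecting the two elliptic terms on one side and the elementary terms on the other, it pairs $\frac{\E}{(1+r)(\E-(1-r)\K)}$ with $-\frac1r$ and the $\K'-\E'$ term with the derivative of $2\log(1-\sqrt r)$, and each pairing is tractable by the monotone l'H\^opital rule (Lemma \ref{MLR}) together with Landen's transformations — the very tools you invoke, applied to the right combinations. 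If you want to complete your argument, proving monotonicity of these two factors is the natural repair.
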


\begin{theorem} \label{mythm3}
The function $f(x)=\psi(1/\ch(x))$ is decreasing and convex from $(0,\infty)$ onto $(0,\infty)$. In particular, for $r,s\in(0,1)$,
\beq\label{myfunineq}
2\psi\left(\frac{\sqrt{2rs}}{\sqrt{1+rs+r's'}}\right)\leq\psi(r)+\psi(s)
\eeq
with equality in the above inequality if and only if $r=s$.
\end{theorem}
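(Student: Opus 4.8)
The plan is to deduce the functional inequality from the convexity of $f$, so the heart of the matter is the monotonicity and convexity of $f(x)=\psi(1/\ch x)$. First I would record that $x\mapsto 1/\ch x$ is a decreasing homeomorphism of $(0,\infty)$ onto $(0,1)$ with $1/\ch x\to 1$ as $x\to 0^+$ and $1/\ch x\to 0$ as $x\to\infty$; since $\psi\colon(0,1)\to(0,\infty)$ is an increasing homeomorphism (Theorem \ref{mythm1}, cf. (\ref{advinequal})), $f$ is a decreasing homeomorphism of $(0,\infty)$ onto $(0,\infty)$. For the functional inequality, set $r=1/\ch x$ and $s=1/\ch y$, so that $r'=\tanh x$ and $s'=\tanh y$. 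The addition formula $\ch(x+y)=\ch x\,\ch y+\sh x\,\sh y$ together with the half-angle identity $\ch\frac{x+y}{2}=\sqrt{(\ch(x+y)+1)/2}$ gives $1/\ch\frac{x+y}{2}=\sqrt{2rs}/\sqrt{1+rs+r's'}$, which is precisely the argument appearing in (\ref{myfunineq}). Hence (\ref{myfunineq}) is exactly the midpoint convexity inequality $2f\big(\tfrac{x+y}{2}\big)\le f(x)+f(y)$, and once $f$ is shown to be strictly convex, equality forces $x=y$, i.e. $r=s$.

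The key computational step is an explicit formula for $\psi'$. Writing $N(r)=\E(r)-(1-r)\K(r)$ and $D(r)=\E(r')-r\K(r')$, so that $\psi=2N/D$, I would use the standard derivatives $\frac{d\K}{dr}=\frac{\E(r)-r'^2\K(r)}{r\,r'^2}$ and $\frac{d\E}{dr}=\frac{\E(r)-\K(r)}{r}$ to obtain, after simplification, the remarkably clean expressions $\frac{dN}{dr}=\frac{\E(r)}{1+r}$ and $\frac{dD}{dr}=\frac{\E(r')-\K(r')}{1+r}$. Substituting into $\psi'=\frac{2}{D^{2}}\left(\frac{dN}{dr}\,D-N\,\frac{dD}{dr}\right)$, the numerator becomes $\frac{2}{1+r}\big[\E(r)D-N(\E(r')-\K(r'))\big]$, and a short expansion shows the bracket equals $(1-r)\big[\E(r)\K(r')+\E(r')\K(r)-\K(r)\K(r')\big]$. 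Legendre's relation $\E(r)\K(r')+\E(r')\K(r)-\K(r)\K(r')=\pi/2$ then collapses everything to
\[
\psi'(r)=\frac{\pi(1-r)}{(1+r)\,(\E(r')-r\K(r'))^{2}}>0,
\]
which both reproves that $\psi$ is increasing and is the engine for the convexity argument.

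Convexity is then reduced to a monotonicity statement. By the chain rule, with $r=1/\ch x$ one has $dr/dx=-rr'$, so $f'(x)=-\phi(r)$ where $\phi(r)=rr'\psi'(r)$, and differentiating once more gives $f''(x)=rr'\,\phi'(r)$. Thus $f$ is convex if and only if $\phi$ is increasing on $(0,1)$. Writing $\phi=\pi P/D^{2}$ with $P(r)=r(1-r)^{3/2}(1+r)^{-1/2}$ and using $\frac{dD}{dr}=(\E(r')-\K(r'))/(1+r)$, the condition $\phi'>0$ simplifies, after clearing the positive factor $1+r$, to the single inequality
\[
\frac{2\,(\K(r')-\E(r'))}{\E(r')-r\K(r')}>\frac{2r^{2}+2r-1}{r(1-r)},\qquad r\in(0,1).
\]
For $r\le(\sqrt3-1)/2$ the right-hand side is nonpositive while the left-hand side is positive, so the inequality is immediate there.

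The main obstacle is establishing the displayed inequality for $r$ near $1$, where both sides blow up and the comparison is genuine. Using $\E(r')-r\K(r')\sim\frac{\pi}{32}\,r'^{4}$ and $\K(r')-\E(r')\sim\frac{\pi}{4}\,r'^{2}$ as $r\to1$, the left-hand side behaves like $16/r'^{2}$ while the right-hand side behaves like $6/r'^{2}$, so the inequality holds asymptotically with room to spare; the task is to make this rigorous on all of $\big((\sqrt3-1)/2,\,1\big)$. Here I would invoke known monotonicity properties and sharp two-sided bounds for the combinations $\K(r')-\E(r')$ and $\E(r')-r\K(r')$ of complete elliptic integrals (of the type pervading the theory of $\K$ and $\E$, cf. (\ref{advinequal}) and Theorem \ref{mythm2}), or alternatively apply the monotone l'Hôpital rule to a ratio built from $P$ and $D$. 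Once $\phi$ is shown to be strictly increasing, $f''>0$ on $(0,\infty)$, so $f$ is strictly convex, and the functional inequality (\ref{myfunineq}) together with its equality case $r=s$ follows exactly as in the first paragraph.
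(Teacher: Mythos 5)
Your reduction agrees with the paper's proof up to the decisive step: both arguments rewrite (\ref{myfunineq}) as midpoint convexity of $f$ via $\ch^2\left(\frac{x+y}{2}\right)=\frac{1+rs+r's'}{2rs}$, both use the derivative formula $\psi'(r)=\pi(1-r)/\big((1+r)(\E'-r\K')^2\big)$ (the paper's (\ref{psider})), and both reduce convexity to showing that $\phi(r)=rr'\psi'(r)$ is strictly increasing on $(0,1)$ (the paper's function $g=\phi/\pi$). All of that, including your logarithmic-derivative reduction to the inequality
\[
\frac{2\,(\K(r')-\E(r'))}{\E(r')-r\K(r')}>\frac{2r^{2}+2r-1}{r(1-r)},
\]
is algebraically correct. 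The genuine gap is that this inequality is never proved where it matters: you verify it on $\big(0,(\sqrt3-1)/2\big]$ (where it is trivial because the right side is nonpositive) and asymptotically as $r\to1$, and for the remaining interval $\big((\sqrt3-1)/2,1\big)$ you only say you would ``invoke known monotonicity properties and sharp two-sided bounds'' or ``apply the monotone l'H\^opital rule to a ratio built from $P$ and $D$,'' without naming any specific lemma or ratio. An endpoint asymptotic gives no control on a whole interval, and since the displayed inequality is \emph{equivalent} to the convexity being proved, deferring it to unspecified known results is exactly where the proof stops; this step is the entire analytic content of the theorem.

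The missing idea, which is how the paper closes precisely this gap, is a Landen change of variable instead of a direct estimate. Put $r=(1-t)/(1+t)$, so $r'=2\sqrt{t}/(1+t)$, $1-r=2t/(1+t)$, $1+r=2/(1+t)$. Landen's transformations (\ref{transfk1})--(\ref{transfe2}) give $\K'(r)=(1+t)\K(t)$ and $\E'(r)=\big(2\E(t)-t'^2\K(t)\big)/(1+t)$, hence $\E'(r)-r\K'(r)=2\big(\E(t)-t'^2\K(t)\big)/(1+t)$, and therefore
\[
\frac{\phi(r)}{\pi}=\frac{rr'(1-r)}{(1+r)\big(\E'(r)-r\K'(r)\big)^{2}}
=\frac{1-t}{2}\,\frac{t^{3/2}}{\big(\E(t)-t'^2\K(t)\big)^{2}}
=\frac{1-t}{2}\;t^{-5/2}\left(\frac{t^{-2}\big(\E(t)-t'^2\K(t)\big)}{1}\right)^{-2}.
\]
Each factor on the right is positive and strictly decreasing in $t$, the last because $t^{-2}\big(\E(t)-t'^2\K(t)\big)$ is strictly increasing by Lemma \ref{lemma}(1); since $t\mapsto r$ is decreasing, $\phi$ is strictly increasing in $r$. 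This one substitution converts your unproved inequality into a known monotonicity lemma and, inserted at the point where your argument breaks off, makes your outline a complete proof (strictness of the monotonicity also settles the equality case $r=s$).
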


\begin{theorem} \label{mythm5}
For $x,y\in(0,1)$,
$$\psi\left(H_p(x,y)\right)\leq H_p\left(\psi(x),\psi(y)\right) {\quad} \mbox{if}{\quad}p\geq0,$$
and
$$\psi\left(H_p(x,y)\right)\geq H_p\left(\psi(x),\psi(y)\right) {\quad} \mbox{if}{\quad} p\leq-1.$$
The equality holds in each case if and only if $x=y$. Here $H_p$ is the power mean defined as
$$
H_p(x,y)=\left\{
\begin{array}{ll}
\left(\dfrac{x^p+y^p}{2}\right)^{1/p},&p\neq0\\
\sqrt{xy},&p=0.
\end{array}
\right.
$$
\end{theorem}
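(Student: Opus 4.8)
The plan is to collapse both families of inequalities onto a single convexity property of an auxiliary function, and then onto a short list of differential inequalities for $\psi$ that can be read off from the elliptic integrals. Fix $p\neq0$, and for $x,y\in(0,1)$ substitute $u=x^{p}$, $v=y^{p}$, writing $G_{p}(u)=\psi(u^{1/p})^{p}$. Raising $\psi(H_{p}(x,y))\le H_{p}(\psi(x),\psi(y))$ to the power $p$ preserves it for $p>0$ and reverses it for $p<0$, and in either case turns it into $G_{p}\!\left(\tfrac{u+v}{2}\right)\le\tfrac12\bigl(G_{p}(u)+G_{p}(v)\bigr)$. Hence, by continuity, the claimed inequality for $p>0$ is exactly the convexity of $G_{p}$ on $(0,1)$, while the reversed inequality for $p<0$ is the convexity of $G_{p}$ on $(1,\infty)$ (the range of $u=x^{p}$ as $x$ runs over $(0,1)$). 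The borderline $p=0$ is the geometric-mean inequality $\psi(\sqrt{xy})\le\sqrt{\psi(x)\psi(y)}$, i.e. the convexity of $g(t):=\log\psi(e^{t})$.

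Next I would compute $G_p''$. With $r=e^{t}$ one has $g'(t)=r\psi'(r)/\psi(r)$ and $g''(t)=r\,\tfrac{d}{dr}\bigl(r\psi'(r)/\psi(r)\bigr)$, where $\psi'=d\psi/dr$; carrying this through the substitution yields the clean formula
\[
\frac{d^{2}G_{p}}{du^{2}}=\frac{\psi(r)^{p}}{p\,r^{2p}}\,N_{p}(t),\qquad N_{p}:=g''+p\,g'(g'-1),\qquad r=u^{1/p}.
\]
Since $\psi(r)^{p}/r^{2p}>0$, convexity of $G_{p}$ is equivalent to $N_{p}\ge0$ when $p>0$ and to $N_{p}\le0$ when $p<0$. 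Because $N_{p}$ is affine in $p$, the whole theorem now rests on three elementary differential inequalities for $\psi$: if $g'\ge1$ and $g''\ge0$, then both summands of $N_{p}$ are nonnegative for every $p\ge0$, giving the first family; and if in addition $g''\le g'(g'-1)$, then for $p\le-1$ one has $p\,g'(g'-1)\le-g'(g'-1)$ and hence $N_{p}\le g''-g'(g'-1)\le0$, giving the second family. The strictness of these inequalities for $0<r<1$ makes the convexity strict, so Jensen's inequality forces $u=v$, that is $x=y$, in the equality case.

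The remaining and genuinely technical step, which I expect to be the main obstacle, is to verify the three inequalities $g'\ge1$, $g''\ge0$, $g''\le g'(g'-1)$ directly from the definition (\ref{psi}). In plain terms these assert that $r\mapsto\psi(r)/r$ is increasing, that $\log\psi(e^{t})$ is convex (the $GG$-convexity of $\psi$), and that $g''$ is controlled by $g'(g'-1)$. The lower bound in (\ref{advinequal}) and Theorem \ref{mythm2} already make the monotonicity of $\psi(r)/r$ very plausible, but a rigorous proof of all three must use the differentiation formulas $\tfrac{d\K}{dr}=(\E-r'^{2}\K)/(r\,r'^{2})$ and $\tfrac{d\E}{dr}=(\E-\K)/r$, together with the known monotonicity of $\E$ and $\K$ and the monotone form of l'H\^opital's rule, to decide the signs of the resulting combinations of $\E$ and $\K$. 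The third, two-sided estimate is the most delicate, since it demands a sharp upper bound on $g''$ rather than merely its sign.
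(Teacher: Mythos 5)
Your reduction is sound as algebra: with $u=x^p$, $G_p(u)=\psi(u^{1/p})^p$ and $g(t)=\log\psi(e^t)$, the formula $G_p''(u)=\frac{\psi(r)^p}{p\,u^2}\bigl(g''+p\,g'(g'-1)\bigr)$ is correct, the case split matches the theorem's hypotheses, and the theorem (including the equality case, via strictness) would indeed follow from the three inequalities $g'\geq1$, $g''\geq0$, $g''\leq g'(g'-1)$ on $(0,1)$. Be aware, though, that this is not a different proof from the paper's but its differential form: the paper fixes $y$, differentiates in $x$, and needs $g'e^{p(g-t)}$ monotone, whose $t$-derivative is exactly $e^{p(g-t)}\bigl(g''+pg'(g'-1)\bigr)$. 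Its three ingredients correspond one-to-one to yours: $\psi(r)/r$ increasing (Theorem \ref{mythm1}) is $g'\geq1$; $r\psi'(r)/\psi(r)$ increasing (Theorem \ref{mythm4}, proved from Lemma \ref{mylemma}(2),(7)) is $g''\geq0$; and the claim that $(\psi(x)/x)^{-2}\psi'(x)$ is strictly decreasing is literally your third inequality, since $(\psi(x)/x)^{-2}\psi'(x)=e^{t-g}g'$ with $x=e^t$ and $\frac{d}{dt}\bigl(e^{t-g}g'\bigr)=e^{t-g}\bigl(g''-g'(g'-1)\bigr)$.

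The genuine gap is that you prove none of the three, and the third one --- on which the entire case $p\leq-1$ rests --- is the hard core of the theorem, not routine sign bookkeeping. The paper establishes it by combining the derivative formula (\ref{psider}) with the definition (\ref{psi}) to get $\bigl(\psi(x)/x\bigr)^{-2}\psi'(x)=\frac{\pi}{4}\left(\frac{x(1-x)}{x'(\E-(1-x)\K)}\right)^2$, and then applying the Landen substitution $x=(1-t)/(1+t)$ (via (\ref{transfk2}) and (\ref{transfe2})) to rewrite this as $\frac{\pi}{4}\left(\frac{t'^2}{\E'-t^2\K'}\cdot\frac{\sqrt{t}}{1+t}\right)^2$, a square of a product of positive increasing functions of $t$ by Lemma \ref{lemma}(1), hence decreasing in $x$. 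Your closing remark that one needs ``a sharp upper bound on $g''$'' suggests a head-on estimate; no such estimate exists in the paper, and the Landen-transformation trick is precisely the missing idea. Likewise, for the $p\geq0$ half you still need $g'\geq1$ and $g''\geq0$: these are available as Theorems \ref{mythm1} and \ref{mythm4}, but you neither cite nor prove them (your appeal to (\ref{advinequal}) and Theorem \ref{mythm2} only makes $g'\geq1$ ``plausible''). As written, the proposal is a correct reformulation plus an accurate list of what remains to be shown, not a proof.
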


\medskip

\section{Preliminaries}
For $0<r<1$, the functions
\beq\label{ellint}
\K(r)=\int_0^{\pi/2}\dfrac{dt}{\sqrt{1-r^2\sin^2t}}, \quad
\E(r)=\int_0^{\pi/2}\sqrt{1-r^2\sin^2t}\,dt
\eeq
with limiting values $\K(0)=\pi/2=\E(0)$, $\K(1-)=\infty$ and $\E(1)=1$
are known as Legendre's complete elliptic integrals of the first and second kind, respectively.
These two functions are connected by Legendre's relation \cite[110.10]{bf}
\beq\label{legendre}
\E\K'+\E'\K-\K\K'=\dfrac\pi2.
\eeq
Some derivative formulas involving these elliptic integrals are as follows \cite[p.21]{bo}:
\beq\label{derivative}
\left\{\begin{array}{ll}
\dfrac{d\K}{dr}=\dfrac{\E-r'^2\K}{rr'^2},&\dfrac{d\E}{dr}=\dfrac{\E-\K}{r},\vspace{1mm}\\
\dfrac{d}{dr}\left(\E-r'^2\K\right)=r\K, &\dfrac{d}{dr}\left(\K-\E\right)=\dfrac{r\E}{r'^2}.\\
\end{array}\right.
\eeq

The functions $\K$ and $\E$ satisfy the following identities due to Landen \cite[163.01, 164.02]{bf}
\beq\label{transfk1}
\K\left(\dfrac{2\sqrt{r}}{1+r}\right)=(1+r)\K(r),
\eeq
\beq\label{transfk2}
\K\left(\dfrac{1-r}{1+r}\right)=\dfrac12(1+r)\K'(r),
\eeq
\beq\label{transfe1}
\E\left(\dfrac{2\sqrt{r}}{1+r}\right)=\dfrac{2\E(r)-r'^2\K(r)}{1+r},
\eeq
\beq\label{transfe2}
\E\left(\dfrac{1-r}{1+r}\right)=\dfrac{\E'(r)+r\K'(r)}{1+r}.
\eeq

Using Landen's transformation formulas, we have the following identities.

\begin{lemma} \label{mytransf}
For $r\in(0,1)$, let $t=(1-r)/(1+r)$. Then
\beq\label{mytransfk}
\K(t^2)=\dfrac{(1+r)^2}4\K'(r^2),
\eeq
\beq\label{mytransfkk}
\K'(t^2)={(1+r)^2}\K(r^2),
\eeq
\beq\label{mytransfe}
\E(t^2)=\dfrac{\E'(r^2)+(r+r^2+r^3)\K'(r^2)}{(1+r)^2},
\eeq
\beq\label{mytransfee}
\E'(t^2)=\dfrac{4\E(r^2)-(3-2r^2-r^4)\K(r^2)}{(1+r)^2}.
\eeq
\end{lemma}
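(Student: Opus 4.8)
The plan is to reduce every quantity at the argument $t^2$ to the corresponding quantity at $r^2$ by applying Landen's formulas (\ref{transfk1})--(\ref{transfe2}) twice, with a single auxiliary variable bridging the two steps. The crucial observation is that, although $t=(1-r)/(1+r)$, the square $t^2$ again has the shape of a Landen argument. First I would set $s=\frac{2r}{1+r^2}$ and verify the elementary identities
\beq
t^2=\frac{1-s}{1+s},\qquad (t^2)'=\sqrt{1-t^4}=\frac{2\sqrt{s}}{1+s},
\eeq
together with $1+s=\frac{(1+r)^2}{1+r^2}$ and $s'=\sqrt{1-s^2}=\frac{1-r^2}{1+r^2}$. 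These are pure algebra: from $t=(1-r)/(1+r)$ one gets $1-t^2=\frac{4r}{(1+r)^2}$ and $1+t^2=\frac{2(1+r^2)}{(1+r)^2}$, whence $\frac{1-t^2}{1+t^2}=\frac{2r}{1+r^2}=s$, and the expression for $(t^2)'$ follows from $1-(t^2)^2=\frac{4s}{(1+s)^2}$.

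The second key observation is that $s$ is itself the ascending Landen transform of $r^2$, namely $s=\frac{2\sqrt{r^2}}{1+r^2}$ with complementary value $s'=\frac{1-r^2}{1+r^2}=\frac{1-r^2}{1+r^2}$ of the form $\frac{1-\rho}{1+\rho}$ with $\rho=r^2$. Thus applying (\ref{transfk1}) and (\ref{transfe1}) with $r$ replaced by $r^2$ evaluates $\K,\E$ at $s$, while (\ref{transfk2}) and (\ref{transfe2}) with $r$ replaced by $r^2$ evaluate them at $s'$. Concretely this gives
$$\K(s)=(1+r^2)\K(r^2),\qquad \K'(s)=\K(s')=\tfrac12(1+r^2)\K'(r^2),$$
$$\E(s)=\frac{2\E(r^2)-(1-r^4)\K(r^2)}{1+r^2},\qquad \E'(s)=\E(s')=\frac{\E'(r^2)+r^2\K'(r^2)}{1+r^2}.$$

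With these in hand the four identities follow from one more pass through Landen's formulas, now with $r$ replaced by $s$. For (\ref{mytransfk}) and (\ref{mytransfkk}) I would use (\ref{transfk2}) and (\ref{transfk1}),
$$\K(t^2)=\K\Big(\frac{1-s}{1+s}\Big)=\tfrac12(1+s)\K'(s),\qquad \K'(t^2)=\K\Big(\frac{2\sqrt{s}}{1+s}\Big)=(1+s)\K(s),$$
and substituting the values of $\K'(s),\K(s)$ and $1+s=\frac{(1+r)^2}{1+r^2}$ collapses the factors of $(1+r^2)$ to yield exactly $\frac{(1+r)^2}{4}\K'(r^2)$ and $(1+r)^2\K(r^2)$. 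For (\ref{mytransfe}) and (\ref{mytransfee}) I would likewise use (\ref{transfe2}) and (\ref{transfe1}),
$$\E(t^2)=\frac{\E'(s)+s\K'(s)}{1+s},\qquad \E'(t^2)=\frac{2\E(s)-s'^2\K(s)}{1+s},$$
and insert the values of $\E'(s),\K'(s),\E(s),\K(s),s,s'$ found above.

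The main obstacle is purely the bookkeeping in the last two identities. For (\ref{mytransfe}) the coefficient of $\K'(r^2)$ assembles as $r^2+r(1+r^2)=r+r^2+r^3$ after clearing $1+r^2$ against $1+s$. For (\ref{mytransfee}) the coefficient of $\K(r^2)$ comes out as $-2(1-r^4)-(1-r^2)^2$, and the only genuinely non-mechanical step is recognizing the factorization
$$2(1-r^4)+(1-r^2)^2=(1-r^2)\big(2(1+r^2)+(1-r^2)\big)=(1-r^2)(3+r^2)=3-2r^2-r^4,$$
which is precisely the coefficient appearing in (\ref{mytransfee}). Everything else is cancellation of the common factor $(1+r^2)$ against $1+s=\frac{(1+r)^2}{1+r^2}$.
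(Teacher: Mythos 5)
Your proof is correct, and it is essentially the same argument as the paper's: both chain two Landen transformations through the intermediate modulus $s=\frac{2r}{1+r^2}$ (equivalently its complement $s'=\frac{1-r^2}{1+r^2}$), applying (\ref{transfk1})--(\ref{transfe2}) once at the level of $r^2$ and once at the level of $t^2$. The only difference is packaging: the paper equates two evaluations of $\K$ or $\E$ at the common arguments $\frac{1-r^2}{1+r^2}$ and $\frac{2r}{1+r^2}$ (reusing the already-proved $\K$-identities in the $\E$-cases), whereas you name the intermediate modulus $s$ explicitly and substitute through it, which amounts to the same computation.
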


\begin{proof}
By Landen's transformations (\ref{transfk1}) and (\ref{transfk2}), we have
$$\dfrac{2(1+r^2)}{(1+r)^2}\K(t^2)=\K\left(\dfrac{1-r^2}{1+r^2}\right)=\dfrac12(1+r^2)\K'(r^2).$$
This implies (\ref{mytransfk}).

For (\ref{mytransfkk}),
$$
\K'(t^2)=\dfrac{(1+r)^2}{1+r^2}\K\left(\dfrac{2r}{1+r^2}\right)=(1+r)^2\K(r^2)
$$
where the first equality is Landen's transformation (\ref{transfk2}) with the parameter $t^2$ and the second equality follows from (\ref{transfk1}) with  the parameter $r^2$.

Using Landen's transformation (\ref{transfe1}) with the change of parameter $r\mapsto t^2$ and the formula (\ref{mytransfk}), we get
\beq\label{idt1}
\E\left(\dfrac{1-r^2}{1+r^2}\right)=\dfrac{(1+r)^2\E(t^2)-r(1+r^2)\K'(r^2)}{1+r^2}.
\eeq
On the other hand, by (\ref{transfe2})
\beq\label{idt2}
\E\left(\dfrac{1-r^2}{1+r^2}\right)=\dfrac{\E'(r^2)+r^2\K'(r^2)}{1+r^2}.
\eeq
Hence (\ref{mytransfe}) follows from (\ref{idt1}) and (\ref{idt2}).

For (\ref{mytransfee}), by the change of parameter $r\mapsto t^2$ in Landen's transformation (\ref{transfe2}) and the formula (\ref{mytransfkk}), we have
\beq\label{idt3}
\E\left(\dfrac{2r}{1+r^2}\right)=\dfrac{(1+r)^2\E'(t^2)+(1-r^2)^2\K(r^2)}{2(1+r^2)}.
\eeq
On the other hand, by (\ref{transfe1})
\beq\label{idt4}
\E\left(\dfrac{2r}{1+r^2}\right)=\dfrac{2\E(r^2)-(1-r^4)\K(r^2)}{1+r^2}.
\eeq
Hence (\ref{mytransfee}) follows from (\ref{idt3}) and (\ref{idt4}).
\end{proof}

The next lemma is a monotone form of l'H\^opital's rule and will be useful in deriving monotonicity properties and obtaining inequalities \cite[Theorem 1.25]{avv}.

\begin{lemma}[\bf Monotone form of l'H\^opital's Rule]\label{MLR}
Let $-\infty<a<b<\infty$, and let $f,g:[a,b]\to\R$ be continuous on $[a,b]$, differentiable on $(a,b)$. Let $g'(x)\neq0$ on $(a,b)$. Then, if $f'(x)/g'(x)$ is increasing (decreasing) on $(a,b)$, so are
$$\dfrac{f(x)-f(a)}{g(x)-g(a)}\qquad \mbox{and}\qquad \dfrac{f(x)-f(b)}{g(x)-g(b)}.$$
If $f'(x)/g'(x)$ is strictly monotone, then the monotonicity on the conclusion is also strict.
\end{lemma}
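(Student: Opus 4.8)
The plan is to reduce the statement to the Cauchy Mean Value Theorem combined with a sign analysis of the derivatives of the two quotients. First I would record two preliminary reductions. Since $g'(x)\neq0$ on $(a,b)$ and a derivative has the intermediate value (Darboux) property, $g'$ cannot change sign on $(a,b)$; hence $g$ is strictly monotone on $[a,b]$, so $g(x)-g(a)$ and $g(x)-g(b)$ are nonzero for $x\in(a,b)$ and both quotients are well defined. Moreover it suffices to treat the case in which $f'/g'$ is increasing: replacing $f$ by $-f$ turns the decreasing case into the increasing one and simultaneously negates both quotients, so the desired monotonicity is preserved.

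Next I would write $F(x)=(f(x)-f(a))/(g(x)-g(a))$ and compute, by the quotient rule,
$$F'(x)=\frac{g'(x)}{g(x)-g(a)}\left(\frac{f'(x)}{g'(x)}-\frac{f(x)-f(a)}{g(x)-g(a)}\right).$$
Because $g$ is strictly monotone, $g(x)-g(a)$ carries the same sign as $g'$, so the prefactor $g'(x)/(g(x)-g(a))$ is positive. The key step is to apply the Cauchy Mean Value Theorem on $[a,x]$ to produce $\xi\in(a,x)$ with $(f(x)-f(a))/(g(x)-g(a))=f'(\xi)/g'(\xi)$; since $\xi<x$ and $f'/g'$ is increasing, $f'(\xi)/g'(\xi)\le f'(x)/g'(x)$, so the parenthesised factor is $\ge 0$ and hence $F'\ge 0$. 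The second quotient $G(x)=(f(x)-f(b))/(g(x)-g(b))$ is handled by the identical computation, the only change being that $g(x)-g(b)$ now has the sign \emph{opposite} to $g'$, so the prefactor $g'(x)/(g(x)-g(b))$ is negative; Cauchy's theorem on $[x,b]$ yields $\eta\in(x,b)$ with $(f(x)-f(b))/(g(x)-g(b))=f'(\eta)/g'(\eta)\ge f'(x)/g'(x)$, making the parenthesised factor $\le 0$, so that again $G'\ge 0$.

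Finally, the strict case follows at once: if $f'/g'$ is strictly increasing, then the inequalities $f'(\xi)/g'(\xi)<f'(x)/g'(x)$ and $f'(\eta)/g'(\eta)>f'(x)/g'(x)$ are strict (as $\xi\neq x\neq\eta$), forcing $F'>0$ and $G'>0$, whence both quotients are strictly increasing. The only delicate point I anticipate is the sign bookkeeping in this last argument: one must verify that for $F$ the positive prefactor pairs with a nonnegative bracket, while for $G$ the negative prefactor pairs with a nonpositive bracket, so that in both cases—and independently of whether $g$ is increasing or decreasing—the sign of the derivative is governed solely by the monotonicity of $f'/g'$.
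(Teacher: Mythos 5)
Your proof is correct. Note, though, that the paper contains no proof of this lemma to compare against: it is quoted as a known result, with a citation to \cite[Theorem 1.25]{avv}, and is used as a black box throughout. Your argument is the standard self-contained proof of that result (and essentially the one found in the cited reference): reduce to the increasing case by replacing $f$ with $-f$, observe via the Darboux property of $g'$ that $g$ is strictly monotone so both quotients are well defined, write
$$
F'(x)=\frac{g'(x)}{g(x)-g(a)}\left(\frac{f'(x)}{g'(x)}-\frac{f(x)-f(a)}{g(x)-g(a)}\right),
$$
and control the bracketed term by the Cauchy Mean Value Theorem. Your sign bookkeeping is handled correctly and is the only genuinely delicate point: for the quotient anchored at $a$ the prefactor $g'(x)/(g(x)-g(a))$ is positive (regardless of whether $g$ increases or decreases) and multiplies a nonnegative bracket, since the Cauchy mean value point $\xi$ lies to the left of $x$; for the quotient anchored at $b$ the prefactor is negative and multiplies a nonpositive bracket, since $\eta$ lies to the right of $x$; and strict monotonicity of $f'/g'$ makes both brackets strictly signed because $\xi\neq x\neq\eta$, yielding the strict conclusion. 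What the paper's approach buys is brevity, treating the lemma as part of the standard toolkit; what your approach buys is a self-contained exposition that makes the paper independent of the reference for this key technical tool.
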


The following Lemma \ref{lemma} is from \cite[Theorem 3.21 (1),(7)]{avv}.

\begin{lemma} \label{lemma}
\emph{(1)}\, $r^{-2}(\E-r'^2\K)$ is strictly increasing and convex from $(0,1)$ onto $(\pi/4,1)$.\\
\emph{(2)}\,  For each $c\in[1/2,\infty)$, $r'^c\K$ is decreasing from $[0,1)$ onto $(0,\pi/2]$.
\end{lemma}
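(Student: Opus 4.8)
The plan is to prove the two parts separately, in each case using the differentiation formulas (\ref{derivative}) to reduce a monotonicity or convexity claim to the sign of an explicit combination of $\K$ and $\E$, and then invoking the Monotone l'H\^opital Rule (Lemma \ref{MLR}). For part (1), I would set $F(r)=\E-r'^2\K$ and read off from (\ref{derivative}) that $\frac{dF}{dr}=r\K$, whence $F(r)=\int_0^r t\K(t)\,dt>0$; in particular $\E>r'^2\K$ on $(0,1)$, so $\frac{d\K}{dr}=\frac{\E-r'^2\K}{rr'^2}>0$ and $\K$ is increasing. The endpoint values follow from $r^{-2}F(r)=r^{-2}\int_0^r t\K(t)\,dt$: letting $r\to0^+$ gives the limit $\K(0)/2=\pi/4$, and letting $r\to1^-$ gives $(\E(1)-0)/1=1$. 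For monotonicity I apply Lemma \ref{MLR} to the quotient $F(r)/r^2$ (both terms vanishing at $0$): the auxiliary ratio of derivatives is $\frac{r\K}{2r}=\K/2$, which is increasing, so $r^{-2}F$ is strictly increasing from $(0,1)$ onto $(\pi/4,1)$.

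The convexity is the first delicate point. Differentiating gives $\frac{d}{dr}\left(r^{-2}F\right)=r^{-3}g(r)$ with $g(r)=(2-r^2)\K-2\E$, and a short reduction with (\ref{derivative}) yields $g(0)=0$ together with $\frac{dg}{dr}=\frac{r(\E-r'^2\K)}{r'^2}$. Applying Lemma \ref{MLR} again to $g(r)/r^3$, the relevant ratio of derivatives is $\frac{1}{3r^2}\cdot\frac{r(\E-r'^2\K)}{r'^2}=\frac13\frac{d\K}{dr}$. The key observation is that $\frac{d\K}{dr}=\frac{r}{r'^2}\cdot r^{-2}F(r)$ is a product of two positive increasing functions: the factor $r/(1-r^2)$ has derivative $(1+r^2)/(1-r^2)^2>0$, and $r^{-2}F$ was just shown to be increasing. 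Hence $\frac{d\K}{dr}$ is increasing, so the ratio of derivatives is increasing, and therefore $g(r)/r^3=\frac{d}{dr}(r^{-2}F)$ is increasing, i.e. $r^{-2}F$ is convex. I want to stress that this bootstrap is not circular: the monotonicity of $r^{-2}F$ is established first and independently, and only afterwards is it used to deduce convexity of $\K$.

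For part (2), writing $\phi_c(r)=r'^c\K$, logarithmic differentiation shows that $\phi_c'<0$ is equivalent to $\frac{\E}{\K}<1+(c-1)r^2$. Since the right-hand side is increasing in $c$, it suffices to handle the extreme case $c=1/2$, i.e. to prove $\frac{\E}{\K}<1-\tfrac12 r^2$, equivalently $\Psi(r):=\frac{\K-\E}{r^2\K}>\frac12$ on $(0,1)$. This quotient tends to $1/2$ as $r\to0^+$, so the inequality is sharp at the left endpoint and cannot be read off termwise; it must come from showing that $\Psi$ is increasing. I would apply Lemma \ref{MLR} with $p=\K-\E$ and $q=r^2\K$ (both vanishing at $0$), using $\frac{dp}{dr}=\frac{r\E}{r'^2}$ from (\ref{derivative}) and $\frac{dq}{dr}=\frac{r(\E+r'^2\K)}{r'^2}$, to obtain the clean ratio $\frac{dp/dr}{dq/dr}=\frac{\E}{\E+r'^2\K}$. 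Thus $\Psi$ is increasing precisely when $\rho(r):=\frac{r'^2\K}{\E}$ is decreasing.

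The decisive step is to show that $\rho$ is decreasing, and this is where I expect the main obstacle to lie. Computing the sign of $\rho'$ directly, its numerator is a positive multiple of $\E^2-2\K\E+r'^2\K^2$, which I would rewrite as $(\E-\K)^2-r^2\K^2$ and factor as $\bigl((1-r)\K-\E\bigr)\bigl((1+r)\K-\E\bigr)$. The second factor is positive because $\K>\E$, so I need $(1-r)\K-\E<0$, i.e. $\E>(1-r)\K$ on $(0,1)$. This auxiliary inequality I prove on its own by setting $\eta(r)=\E-(1-r)\K$, noting $\eta(0)=0$, and checking via (\ref{derivative}) that $\frac{d\eta}{dr}=\frac{\E}{1+r}>0$, whence $\eta>0$. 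Consequently $\rho$ is strictly decreasing, $\Psi$ is strictly increasing, and $\Psi>\Psi(0^+)=1/2$, giving the $c=1/2$ inequality and hence the decrease of $\phi_c$ for all $c\ge1/2$. The endpoint values $\phi_c(0)=\pi/2$ and $\phi_c(1^-)=0$, combined with strict monotonicity and continuity, then yield the stated homeomorphism of $[0,1)$ onto $(0,\pi/2]$. The whole difficulty of part (2) is concentrated in this chain: the target is degenerate at $r=0$, which forces the nested use of Lemma \ref{MLR} and the factorization reducing everything to $\E>(1-r)\K$.
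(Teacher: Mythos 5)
Your proof is correct, but be aware that the paper contains no proof of this lemma to compare against: it is quoted verbatim from \cite[Theorem 3.21 (1),(7)]{avv}. Checking your computations against the paper's toolkit: in part (1), the identities $\frac{d}{dr}\bigl(r^{-2}(\E-r'^2\K)\bigr)=r^{-3}\bigl((2-r^2)\K-2\E\bigr)$ and $\frac{d}{dr}\bigl((2-r^2)\K-2\E\bigr)=r(\E-r'^2\K)/r'^2$ are both right, and your bootstrap --- first monotonicity of $r^{-2}(\E-r'^2\K)$ via the ratio $\K/2$, then convexity from writing $\frac{d\K}{dr}=\frac{r}{1-r^2}\cdot r^{-2}(\E-r'^2\K)$ as a product of positive increasing factors --- is sound and genuinely non-circular, since $\E>r'^2\K$ is obtained independently by integrating $r\K$. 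In part (2), the reduction of $\phi_c'<0$ to $\E/\K<1+(c-1)r^2$, hence to the single case $c=1/2$, the derivative $\frac{d}{dr}(r^2\K)=r(\E+r'^2\K)/r'^2$, the factorization $\E^2-2\K\E+r'^2\K^2=\bigl((1-r)\K-\E\bigr)\bigl((1+r)\K-\E\bigr)$, and the positivity of $\eta(r)=\E-(1-r)\K$ via $\eta'=\E/(1+r)$ all check out. Note that this last auxiliary fact is exactly the paper's Lemma \ref{mylemma}(1), whose proof is independent of Lemma \ref{lemma}, so you could cross-reference it rather than re-derive it.

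By way of comparison with the cited source: a shorter classical route to part (1) integrates $\frac{d}{dr}(\E-r'^2\K)=r\K$ termwise in the Maclaurin series $\K=\frac{\pi}{2}\sum_{n\ge0}c_nr^{2n}$, $c_n=\bigl((1/2)_n/n!\bigr)^2$, giving $r^{-2}(\E-r'^2\K)=\frac{\pi}{2}\sum_{n\ge0}\frac{c_n}{2n+2}\,r^{2n}$, a power series in $r^2$ with positive coefficients, from which strict monotonicity, convexity, and both limiting values $\pi/4$ and $1$ are immediate; your MLR argument trades that global series fact for a purely local proof using only (\ref{derivative}) and Lemma \ref{MLR}, which fits the paper's methodology but costs an extra nested application. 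One wording quibble: ``$\Psi$ is increasing precisely when $\rho$ is decreasing'' overstates Lemma \ref{MLR}, which is one-directional; the exact equivalence holds between monotonicity of the derivative ratio $\E/(\E+r'^2\K)=1/(1+\rho)$ and monotonicity of $\rho$, and the implication you actually use ($\rho$ strictly decreasing $\Rightarrow$ $\Psi$ strictly increasing) is the valid direction, so the proof stands.
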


\begin{lemma} \label{mylemma}
\emph{(1)}\, $f_1(r)=\E-(1-r)\K$ is strictly increasing and concave from $(0,1)$ onto $(0,1)$.\\
\emph{(2)}\, $f_2(r)=(\E-(1-r)\K)/r$ is strictly decreasing from $(0,1)$ onto $(1,\pi/2)$.\\
\emph{(3)}\, $f_3(r)=\E'-r\K'$ is strictly decreasing and convex from $(0,1)$ onto $(0,1)$.\\
\emph{(4)}\, $f_4(r)=(\E'-r\K')/(1-r)$ is strictly decreasing from $(0,1)$ onto $(0,1)$.\\
\emph{(5)}\, $f_5(r)=(\E-r'\K)/(1-\sqrt{r'})^2$ is strictly decreasing from $(0,1)$ onto $(1,\pi/2)$.\\
\emph{(6)}\, $f_6(r)=(3-r)\E'-(1+r)\K'$ is increasing form $(0,1)$ onto $(-\infty,0)$.\\
\emph{(7)}\, $f_7(r)=(1+r)(\E'-r\K')/(1-r)$ is strictly deceasing from $(0,1)$ onto $(0,1)$.\\
\emph{(8)}\, $f_8(r)=(\E-(1-r)\K)/(\sqrt{r}(1-r)\K)$ is strictly increasing from $(0,1)$ onto $(0,\infty)$. $f_8(0.479047\cdots)=1.$
\end{lemma}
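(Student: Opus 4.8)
The plan is to treat the eight items by computing derivatives from the formulas in (\ref{derivative}) and reading off the sign, reducing the quotient-type items to the basic ones by the monotone l'H\^opital rule (Lemma \ref{MLR}) and, where a quotient is not amenable to that rule, by Landen's transformation (Lemma \ref{mytransf}). In every case the decisive positivity input will be Lemma \ref{lemma}, especially the inequality $\E-r'^2\K>0$ and the associated monotone map of Lemma \ref{lemma}(1). The endpoint values throughout follow from $\K(0)=\E(0)=\pi/2$, $\E(1)=1$, together with the standard facts that $(1-r)\K\to0$ as $r\to1$ and $r\K'\to0$ as $r\to0$.

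\textbf{The basic items and their quotients.} For (1) and (3) I would differentiate directly: a short computation using (\ref{derivative}) collapses the first derivatives to the clean forms $f_1'(r)=\E/(1+r)$ and $f_3'(r)=(\E'-\K')/(1+r)$, whose signs are immediate, and a second differentiation gives $f_1''(r)=(\E-(1+r)\K)/(r(1+r)^2)<0$ and $f_3''(r)=(\E'+r\K')/(r(1+r)^2)>0$, yielding concavity and convexity. Items (2) and (4) are then the quotients $f_1/r$ and $f_3/(1-r)$: applying Lemma \ref{MLR} to the pairs $(f_1,r)$ (vanishing at $0$) and $(f_3,1-r)$ (vanishing at $1$) reduces monotonicity to that of $f_1'(r)=\E/(1+r)$ and of $-f_3'(r)=(\K'-\E')/(1+r)$, both of which are decreasing (using that $\K'-\E'=\K(r')-\E(r')$ is decreasing in $r$), and the limits come from l'H\^opital.

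\textbf{The items handled by Landen's transformation.} The two quotients that resist Lemma \ref{MLR} directly, (5) and (7), I would handle by substitution. For (7), setting $r=(1-q)/(1+q)$ and using (\ref{transfk1}), (\ref{transfe1}) turns $\E'-r\K'$ into $2[\E(q)-(1-q^2)\K(q)]/(1+q)$ and $(1+r)/(1-r)$ into $1/q$, so that $f_7$ becomes $\frac{2q}{1+q}\cdot\frac{\E(q)-q'^2\K(q)}{q^2}$, a product of two positive increasing functions by Lemma \ref{lemma}(1); hence $f_7$ is increasing in $q$ and decreasing in $r$. For (5), the substitution $\sqrt{r'}=(1-p)/(1+p)$ combined with the formulas of Lemma \ref{mytransf} reduces $f_5(r)$ \emph{exactly} to $f_2(p^2)=\frac{\E(p^2)-(1-p^2)\K(p^2)}{p^2}$; since $p^2$ increases with $r$, item (5) follows from item (2), and the endpoints map as required. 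For (6), direct differentiation yields $f_6'(r)=\frac{(1-r)^2[(2r+1)\E'-r\K']}{rr'^2}$, so everything hinges on the sign of $h(r)=(2r+1)\E'-r\K'$; the same substitution $r=(1-q)/(1+q)$ rewrites $h$ as $\frac{2[(3-q)\E(q)-2q'^2\K(q)]}{(1+q)^2}$, which is positive because $(3-q)\E(q)>(3-q)q'^2\K(q)>2q'^2\K(q)$ using $\E(q)>q'^2\K(q)$ and $3-q>2$.

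\textbf{The hard part, item (8).} The genuine obstacle is (8), where the denominator $\sqrt r(1-r)\K$ is \emph{not} monotone (it vanishes at both endpoints), so Lemma \ref{MLR} is unavailable. Writing $f_8=u/v$ with $u=\E-(1-r)\K$ and $v=\sqrt r(1-r)\K$, I would instead prove $u'v-uv'>0$ outright. Clearing denominators reduces this to the inequality $2r(1-r)\E\K>(\E-(1-r)\K)(2\E-(1+r)^2\K)$, and dividing by $\K^2$ makes the difference of the two sides a quadratic in $x=\E/\K$. The decisive point is that this quadratic has discriminant $(1+r)^4$, a perfect square, so it factors as $2(x-(1+r))(x-\tfrac{r'^2}{2})$. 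Since $0<x<1<1+r$ and, by Lemma \ref{lemma}(1), $x=\E/\K>r'^2>\tfrac{r'^2}{2}$, the two factors have opposite signs, the inequality follows, and hence $f_8'>0$. The endpoint values $f_8(0^+)=0$ and $f_8(1^-)=\infty$ are read off as above, and the value $f_8(0.479\ldots)=1$ is then located by monotonicity and the intermediate value theorem.
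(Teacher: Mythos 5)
Your proposal is correct, and for items (1)--(4) it coincides with the paper's proof: the same collapsed derivatives $f_1'=\E/(1+r)$, $f_3'=(\E'-\K')/(1+r)$, and the same applications of Lemma \ref{MLR} to the pairs $(f_1,r)$ and $(f_3,1-r)$ (your explicit second derivatives are an equivalent substitute for the paper's observation that $f_1'$ is decreasing and $f_3'$ increasing). The genuine differences are in (5)--(7). For (5), the paper substitutes $r=2\sqrt{x}/(1+x)$ and runs Lemma \ref{MLR} once more, with $h_1'/h_2'=x'\K(x)$ decreasing by Lemma \ref{lemma}(2); your reduction of $f_5$ exactly to $f_2(p^2)$ via $\sqrt{r'}=(1-p)/(1+p)$ and Lemma \ref{mytransf} is a clean alternative that recycles item (2) and needs no further monotonicity input --- arguably tidier, and it also gives the endpoint values for free. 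For (7), the paper differentiates and finds $f_7'=f_6/(1-r)^2$, which is the sole reason item (6) appears in the lemma; you instead write $f_7$ as a product of two positive increasing functions of $q=(1-r)/(1+r)$ via Landen and Lemma \ref{lemma}(1), making (7) independent of (6). Conversely, your treatment of (6) is more laborious than necessary: once you have $f_6'=(1-r)^2\left[(2r+1)\E'-r\K'\right]/(rr'^2)$, positivity is immediate from $(2r+1)\E'-r\K'=2r\E'+(\E'-r\K')>0$ by item (3), with no need for the Landen rewriting (this is exactly what the paper does, in the equivalent form $f_6'=(1-r)(2r\E'+\E'-r\K')/(r(1+r))$). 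For (8) you and the paper prove the same fact: the paper's asserted numerator $((1+r)\K-\E)(2\E-r'^2\K)$ is precisely $2\K^2\left((1+r)-x\right)\left(x-\tfrac{r'^2}{2}\right)$ with $x=\E/\K$, so your discriminant computation is simply a way of discovering that factorization. Note only a sign-convention slip: with the difference read as left side minus right side, the quadratic factors as $2\left((1+r)-x\right)\left(x-\tfrac{r'^2}{2}\right)$ rather than $2(x-(1+r))(x-\tfrac{r'^2}{2})$; your conclusion is unaffected, since you correctly observe that the factors $x-(1+r)$ and $x-\tfrac{r'^2}{2}$ have opposite signs.
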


\begin{proof}
(1)\, By differentiation and the derivative formulas (\ref{derivative}),
$$f_1'(r)=\dfrac{\E}{1+r}$$
which is positive and decreasing. Then the properties of monotonicity and concavity of $f_1$ follow. The limiting value
$f_1(0)=0$ is clear and $f_1(1-)=\E(1)-\lim\limits_{r\to1-}(r'^2\K/(1+r))=1$ by Lemma \ref{lemma}(2).

(2)\,  Since $f_1$ is concave and $f_2(r)=f_1(r)/r$, $f_2$ is decreasing by the monotone form of
l'H\^opital's rule. By l'H\^opital's rule $f_2(0)=f'_1(0)=\pi/2$, and $f_2(1)=1$ is clear.

(3)\, By (\ref{derivative}), we have
$$f_3'(r)=-\dfrac{\K'-\E'}{1+r},$$
which is negative and increasing in $(0,1)$. Then $f_3$ is decreasing and convex in $(0,1)$. The limiting value
$f_3(0)=1$ follows from Lemma \ref{lemma}(2), and $f_3(1)=0$ is clear.

(4)\, Let $h(r)=1-r$. Since $f_3$ is convex, $f_3'(r)/h'(r)$ is decreasing. Thus $f_4(r)=f_3(r)/h(r)$ is also decreasing by the monotone form of l'H\^opital's rule. By l'H\^opital's rule $f_4(1)=-f'_3(1)=0$, and $f_4(0)=f_3(0)=1$.

(5)\, For the proof we first make the change of variable $r=2\sqrt{x}/(1+x)$. The Landen transformations (\ref{transfk1}) and (\ref{transfe1}) lead to
$$h(x)=f_5\left(\dfrac{2\sqrt{x}}{1+x}\right)=\dfrac{\E(x)-x'^2\K(x)}{1-x'}=\dfrac{h_1(x)}{h_2(x)},$$
where $h_1(x)=\E(x)-x'^2\K(x)$ and $h_2(x)=1-x'$ with $h_1(0)=0=h_2(0)$. Then by (\ref{derivative}) we have
$$\dfrac{h_1'(x)}{h_2'(x)}=\dfrac{x\K(x)}{x/x'}=x'\K(x),$$
which is strictly decreasing by Lemma \ref{lemma}(2). This implies that $h$ is decreasing by the monotone form of
l'H\^opital's rule, and hence $f_5$ is also decreasing in $(0,1)$.

(6)\, By differentiation, we have
$$f_6'(r)=\dfrac{(1-r)(2r\E'+\E'-r\K')}{r(1+r)}>0,$$
and hence $f_6$ is increasing. The limiting values are clear.

(7)\, By simple computation, $f_7'(r)=f_6(r)/(1-r)^2<0$ and hence $f_7$ is decreasing. The limiting values follow from part (4).

(8)\, Differentiation and simplification give that
$$f_8'(r)=\dfrac{((1+r)\K-\E)(2\E-r'^2\K)}{2r^{3/2}(1+r)(1-r)^2\K^2}>0,$$
and hence $f_8$ is strictly increasing. The limit
$$f_8(0+)=\lim_{r\to0+}\dfrac{\E-(1-r)\K}{\sqrt{r}\K}=\lim_{r\to0+}\dfrac{\E-(1-r)\K}{r}\dfrac{\sqrt{r}}{\K}=0$$
follows from the part (2). The limit $f_8(1-)=\infty$ is clear.
\end{proof}

Let
\begin{equation}\label{grotzsch}
\mu(r)=\frac{\pi}{2} \frac{\K'(r)}{\K(r)}
\end{equation}
be the modulus of Gr\"otzsch's ring $\B^2\setminus[0,r]$ (see \cite{lv},\cite{avv}).

\begin{lemma} \label{mylemmupsi}
The function $f(r)=\mu(r)\psi(r)$ is strictly increasing from $(0,1)$ onto $(0,\infty)$.
\end{lemma}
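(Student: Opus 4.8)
The plan is to collapse the product $\mu\psi$ to an elementary expression in a single combination of elliptic integrals by invoking Legendre's relation, and then to read off the monotonicity from a factorization into decreasing functions. Writing
\[
\mu(r)\psi(r)=\frac{\pi\K'(\E-(1-r)\K)}{\K(\E'-r\K')}=\frac{\pi N}{D},\qquad N=\K'(\E-(1-r)\K),\quad D=\K(\E'-r\K'),
\]
I would first observe that
\[
N+D=\E\K'+\E'\K-\K\K'=\frac{\pi}{2}
\]
by Legendre's relation (\ref{legendre}). Hence $N=\pi/2-D$ and
\[
\mu(r)\psi(r)=\frac{\pi(\pi/2-D)}{D}=\frac{\pi^2}{2D}-\pi .
\]
Consequently $f=\mu\psi$ is strictly increasing if and only if $D(r)=\K(\E'-r\K')$ is strictly decreasing, and the claim that the range is $(0,\infty)$ will follow once I check the boundary values $D(0^+)=\pi/2$ and $D(1^-)=0$.

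It then remains to prove that $D(r)=\K(r)\bigl(\E'(r)-r\K'(r)\bigr)$ is strictly decreasing on $(0,1)$. The key step is the factorization
\[
D(r)=\frac{(1-r)\K(r)}{1+r}\cdot\frac{(1+r)(\E'(r)-r\K'(r))}{1-r}=\frac{(1-r)\K(r)}{1+r}\cdot f_7(r),
\]
in the notation of Lemma \ref{mylemma}. The second factor $f_7$ is strictly decreasing by Lemma \ref{mylemma}(7). For the first factor it suffices to show that $(1-r)\K(r)$ is strictly decreasing, since $1/(1+r)$ is; using the derivative formula for $\K$ in (\ref{derivative}) together with $r'^2=(1-r)(1+r)$, a short computation gives
\[
\frac{d}{dr}\bigl[(1-r)\K(r)\bigr]=\frac{\E(r)-(1+r)\K(r)}{r(1+r)}<0,
\]
the inequality holding because $\E<\K<(1+r)\K$ on $(0,1)$. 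Thus $D$ is a product of two positive, strictly decreasing functions and is therefore strictly decreasing.

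Finally I would assemble the endpoint values from the same factorization: $f_7(0^+)=1$ and $f_7(1^-)=0$ by Lemma \ref{mylemma}(7), while $(1-r)\K(r)/(1+r)\to\pi/2$ as $r\to0^+$ and $\to0$ as $r\to1^-$; hence $D(0^+)=\pi/2$ and $D(1^-)=0$, giving $f(0^+)=0$ and $f(1^-)=\infty$, which completes the statement.

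I expect the only genuine obstacle to be discovering the two structural facts that make the argument short. The first is that Legendre's relation forces $N+D=\pi/2$ and thereby reduces the whole problem to the monotonicity of the single quantity $D$. The second is that $D$ splits as $\tfrac{(1-r)\K}{1+r}\cdot f_7$ into factors each already known (or immediately shown) to be decreasing. Without this factorization one is led to differentiate $D$ directly, which produces the inequality $f_1f_3<r(1-r)^2\K\K'$ whose sign is not transparent; the factorization bypasses that difficulty entirely, so that the only remaining work is the one-line verification that $(1-r)\K$ is decreasing.
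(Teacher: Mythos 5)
Your proof is correct, but it takes a genuinely different route from the paper. The paper proves the lemma by factoring the product directly as
\[
\mu(r)\psi(r)=\pi\,\bigl(\sqrt{r}\,\K'\bigr)\cdot\frac{\E-(1-r)\K}{\sqrt{r}(1-r)\K}\cdot\frac{1-r}{\E'-r\K'},
\]
a product of three positive strictly increasing functions: $\sqrt{r}\,\K'$ increases by Lemma \ref{lemma}(2) with $c=1/2$, the middle factor is $f_8$ of Lemma \ref{mylemma}(8), and the last is $1/f_4$ by Lemma \ref{mylemma}(4); the limiting values are read off from the same three factors. You instead observe that with $N=\K'(\E-(1-r)\K)$ and $D=\K(\E'-r\K')$, Legendre's relation (\ref{legendre}) gives exactly $N+D=\pi/2$ (the cross terms $\pm r\K\K'$ cancel), so that
\[
\mu(r)\psi(r)=\frac{\pi^2}{2D}-\pi,
\]
reducing everything to the single claim that $D$ is strictly decreasing, which you get from the factorization $D=\frac{(1-r)\K}{1+r}\cdot f_7$ together with Lemma \ref{mylemma}(7) and the elementary computation $\frac{d}{dr}[(1-r)\K]=\frac{\E-(1+r)\K}{r(1+r)}<0$; your endpoint evaluations $D(0^+)=\pi/2$, $D(1^-)=0$ are also right and correctly resolve the $\infty\cdot 0$ indeterminacy at $r=1$. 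What your route buys is that it bypasses Lemma \ref{mylemma}(8) entirely --- the one ingredient of the paper's proof whose own verification needs a nontrivial derivative computation --- and it yields as a byproduct the clean closed-form identity $\mu(r)\psi(r)+\pi=\pi^2/\bigl(2\K(\E'-r\K')\bigr)$, which is of independent interest. What the paper's route buys is modularity: the factor $f_8$ it relies on is reused later (in Theorem \ref{modcomp}, where its root $0.479047\cdots$ pinpoints the extremum $r_0$), so the paper gets that lemma's cost amortized, whereas your argument, while shorter here, is self-contained to this one statement.
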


\begin{proof}
Since the function $f$ can be rewritten as
$$f(r)=\pi\sqrt{r}\K'\dfrac{\E-(1-r)\K}{\sqrt{r}(1-r)\K}\dfrac{1-r}{\E'-r\K'},$$
the conclusion follows from Lemma \ref{lemma}(2), Lemma \ref{mylemma}(4) and (8).
\end{proof}

\medskip

\section{Proofs of Main Results}

In this section  we will prove two identities involving the function $\psi$, and some functional inequalities and elementary estimates for the function $\psi$ are also derived from the monotonicity and convexity of the combinations of the function $\psi$ and some elementary functions.

\begin{theorem} \label{mythm1}
The function $\psi(r)$ is strictly increasing and convex from $(0,1)$ onto $(0,\infty)$, and the function
$\psi(r)/r$ is strictly increasing from $(0,1)$ onto $(0,\infty)$.
\end{theorem}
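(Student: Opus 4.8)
The crux of the whole statement is a closed-form expression for $\psi'$. Writing $\psi=2f_1/f_3$ with $f_1(r)=\E-(1-r)\K$ and $f_3(r)=\E'-r\K'$ as in Lemma \ref{mylemma}, I would differentiate and insert the derivatives $f_1'=\E/(1+r)$ and $f_3'=-(\K'-\E')/(1+r)$ already computed in the proof of that lemma. Putting the quotient rule over the common denominator $(1+r)f_3^2$, the numerator of $\psi'$ becomes $\E(\E'-r\K')+(\E-(1-r)\K)(\K'-\E')$, which after cancellation equals $(1-r)(\E\K'+\E'\K-\K\K')$. Legendre's relation (\ref{legendre}) then collapses the parenthesis to $\pi/2$, giving the clean formula
\[
\psi'(r)=\frac{\pi(1-r)}{(1+r)(\E'-r\K')^2}.
\]
This Legendre-relation simplification is the one step I would verify most carefully, since the entire argument rests on it.

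Given this formula, strict monotonicity is immediate: the right-hand side is positive on $(0,1)$, so $\psi$ is strictly increasing. For the range I would use the boundary values from Lemma \ref{mylemma}: $f_1(0^+)=0$, $f_3(0^+)=1$, $f_1(1^-)=1$, $f_3(1^-)=0^+$ yield $\psi(0^+)=0$ and $\psi(1^-)=\infty$, so by continuity $\psi$ maps $(0,1)$ onto $(0,\infty)$.

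For convexity it remains to show $\psi'$ is increasing, and here the good variable is $f_7(r)=(1+r)(\E'-r\K')/(1-r)$ from Lemma \ref{mylemma}(7). Since $\E'-r\K'=(1-r)f_7/(1+r)$, substituting into the derivative formula gives
\[
\psi'(r)=\frac{\pi(1+r)}{(1-r)\,f_7(r)^2}.
\]
Now $(1+r)/(1-r)$ is positive and increasing, while $f_7$ is positive and strictly decreasing, so $1/f_7^2$ is positive and increasing; the product of two positive increasing functions is increasing. Hence $\psi'$ is strictly increasing and $\psi$ is strictly convex. This rewriting, which turns an apparent product of a decreasing and an increasing factor into a product of two increasing factors, is the decisive trick.

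Finally, for $\psi(r)/r$ I would extend $\psi$ continuously to $[0,1)$ by $\psi(0)=0$ and apply the monotone form of l'H\^opital's rule (Lemma \ref{MLR}) with $f=\psi$ and $g(r)=r$ on each interval $[0,b]$: since $f'/g'=\psi'$ is increasing, so is $\psi(r)/r=(\psi(r)-\psi(0))/(r-0)$. The limits $\psi(r)/r\to\psi'(0)=\pi$ as $r\to0^+$ and $\psi(r)/r\to\infty$ as $r\to1^-$ identify the image as $(\pi,\infty)$; this is consistent with the lower bound $\psi(r)>\pi r/(1-r)^2$ in (\ref{advinequal}), which already forces $\psi(r)/r>\pi$, so the lower endpoint in the statement should read $\pi$ rather than $0$.
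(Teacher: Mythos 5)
Your proof is correct and follows essentially the same route as the paper: the paper's entire proof is the derivative formula (\ref{psider}), namely $\psi'(r)=\frac{\pi}{1-r^2}\left(\frac{1-r}{\E'-r\K'}\right)^2$, obtained exactly as you obtain it from (\ref{derivative}) and Legendre's relation (\ref{legendre}); convexity then follows there because this is the product of the increasing factor $\pi/(1-r^2)$ and $1/f_4(r)^2$ with $f_4(r)=(\E'-r\K')/(1-r)$ strictly decreasing by Lemma \ref{mylemma}(4) --- your use of $f_7=(1+r)f_4$ from Lemma \ref{mylemma}(7) is the same decomposition written slightly differently --- and the monotonicity of $\psi(r)/r$ is deduced from the monotone form of l'H\^opital's rule just as in your last paragraph. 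Your closing correction is also right: since $\psi'(0^+)=\pi$ (equivalently, by the limiting value $\pi$ in Theorem \ref{mythm2}, or by (\ref{advinequal})), the function $\psi(r)/r$ increases from $\pi$ to $\infty$, so the claim in the statement that $\psi(r)/r$ maps $(0,1)$ onto $(0,\infty)$ is a slip in the paper, and the correct range is $(\pi,\infty)$.
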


\begin{proof}
By differentiation, and using (\ref{derivative}) and Legendre's identity (\ref{legendre}), we have
\beq\label{psider}
\dfrac{d\psi}{dr}=\dfrac{2(1-r)}{1+r}\dfrac{\E'\K+\E\K'-\K\K'}{(\E'-r\K')^2}=\dfrac{\pi}{1-r^2}\left(\frac{1-r}{\E'-r\K'}\right)^2,
\eeq
which is positive and strictly increasing by Lemma \ref{mylemma}(4). Hence $\psi(r)$ is strictly increasing and convex, and consequently $\psi(r)/r$ is strictly increasing by the monotone form of l'H\^opital's rule.
\end{proof}

\begin{proof}[\bf Proof of Theorem \ref{myidentity}]
By simple calculations, the first identity follows from the definition of $\psi$ and Lemma \ref{mytransf}.
The second identity follows from the first one with the change of parameter $r\mapsto\sqrt{(1-r)/(1+r)}$.
\end{proof}

\begin{corollary}\label{specvalu}
$\psi(3-2\sqrt{2})=1.$
\end{corollary}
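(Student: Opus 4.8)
The plan is to specialize the first identity of Theorem~\ref{myidentity} to the fixed point of the involution $\sigma(r)=(1-r)/(1+r)$. First I would record the elementary fact that $\sigma$ maps $(0,1)$ onto itself and satisfies $\sigma(\sigma(r))=r$, so that $\sigma$ is an involution and has a unique fixed point in $(0,1)$. Solving $r=(1-r)/(1+r)$ amounts to the quadratic $r^2+2r-1=0$, whose only root in $(0,1)$ is $r_0=\sqrt2-1$.

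Next I would substitute $r=r_0$ into the first identity $\psi(r^2)\,\psi(\sigma(r)^2)=1$. Since $\sigma(r_0)=r_0$, the two arguments coincide, as $r_0^2=\sigma(r_0)^2$, and the identity collapses to $\psi(r_0^2)^2=1$. Because $\psi$ maps $(0,1)$ into $(0,\infty)$ (Theorem~\ref{mythm1}), the quantity $\psi(r_0^2)$ is positive, so taking the positive square root forces $\psi(r_0^2)=1$.

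Finally I would compute $r_0^2=(\sqrt2-1)^2=3-2\sqrt2$, which gives $\psi(3-2\sqrt2)=1$ as claimed. There is essentially no obstacle here beyond correctly locating the fixed point of $\sigma$; once the self-inverse structure of $\sigma$ is noticed, the corollary is a one-line substitution into Theorem~\ref{myidentity}.
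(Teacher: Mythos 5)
Your proof is correct and is essentially the paper's own argument in disguise: the paper specializes the \emph{second} identity of Theorem~\ref{myidentity} at $r=1/\sqrt{2}$ (the point where $r=r'$, so that both factors become $\psi(3-2\sqrt{2})$), while you specialize the \emph{first} identity at the fixed point $r_0=\sqrt{2}-1$ of $\sigma(r)=(1-r)/(1+r)$. Since the second identity is obtained from the first by the substitution $r\mapsto\sqrt{(1-r)/(1+r)}$, which carries $1/\sqrt{2}$ to $\sqrt{2}-1$, the two specializations are the same computation, both reducing to $\psi(3-2\sqrt{2})^2=1$ and concluding by positivity of $\psi$.
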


\begin{proof}
Let $r=1/\sqrt{2}$. Then $(1-r)/(1+r)=3-2\sqrt{2}=(1-r')/(1+r')$, and the second identity in the Theorem \ref{myidentity} implies $\psi(3-2\sqrt{2})=1.$
\end{proof}

\begin{remark}
Let $\Delta$ be the family of curves lying outside the rectangle $R$ and joining the opposite sides of length $a$. Then a basic fact is
$$\mathcal{M}(\Gamma)=1/\mathcal{M}(\Delta).$$
By \eqref{dpformula} and \eqref{grotzsch}, we have
$$\mathcal{M}(\Gamma)=\mu(r)/\pi,\quad\mbox{and}\quad  \mathcal{M}(\Delta)=\mu(s)/\pi$$
with $r=\psi^{-1}(a/b)$ and $s=\psi^{-1}(b/a).$ By the identity \cite[Exercises 5.68(2)]{avv}
$$\mu(r^2)\mu\left(\left(\dfrac{1-r}{1+r}\right)^2\right)=\pi^2,$$
it is easy to see that $\mathcal{M}(\Gamma)=1/\mathcal{M}(\Delta)$ is equivalent to $s=((1-\sqrt{r})/(1+\sqrt{r}))^2.$
Since $s=\psi^{-1}(b/a)=\psi^{-1}(1/\psi(r))$, we have $s=((1-\sqrt{r})/(1+\sqrt{r}))^2$ which is equivalent to
$$\dfrac{1}{\psi(r)}=\psi\left(\left(\dfrac{1-\sqrt{r}}{1+\sqrt{r}}\right)^2\right).$$
\end{remark}

\begin{proof}[\bf Proof of Theorem \ref{mythm2}]
The theorem follows from $$f(r)=\dfrac{(1-\sqrt{r})^2\psi(r)}{r}=2\dfrac{\E-(1-r)\K}{r}\dfrac{(1-\sqrt{r})^2}{\E'-r\K'},$$
since $(\E-(1-r)\K)/r$ and $(1-\sqrt{r})^2/(\E'-r\K')$ are both decreasing by Lemma \ref{mylemma}(2), (5), respectively. The limiting values are clear by Lemma \ref{mylemma}(2), (5).
\end{proof}

\begin{corollary}
The function $g(r)=(1-\sqrt{r})\arctanh(1-\sqrt{r})\psi(r)/r$ is strictly decreasing from $(0,1)$ onto $(4/\pi,\infty)$.
\end{corollary}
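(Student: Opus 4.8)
The plan is to reduce this statement to Theorem~\ref{mythm2}, which already controls the factor $(1-\sqrt{r})^2\psi(r)/r$. First I would factor
$$g(r)=\frac{(1-\sqrt{r})^2\psi(r)}{r}\cdot\frac{\arctanh(1-\sqrt{r})}{1-\sqrt{r}}=f(r)\,\phi(r),$$
where $f$ is exactly the function of Theorem~\ref{mythm2} and $\phi(r)=\arctanh(1-\sqrt{r})/(1-\sqrt{r})$. By that theorem $f$ is strictly decreasing and takes values in $(4/\pi,\pi)$, so it remains only to understand $\phi$.

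For $\phi$, I would substitute $t=1-\sqrt{r}$, which decreases strictly from $1$ to $0$ as $r$ runs over $(0,1)$, and use the Maclaurin expansion $\arctanh(t)/t=\sum_{n\ge0}t^{2n}/(2n+1)$. Every coefficient is positive and each power $t^{2n}$ with $n\ge1$ is strictly increasing on $(0,1)$, so $t\mapsto\arctanh(t)/t$ is strictly increasing there; composing with the strictly decreasing map $r\mapsto 1-\sqrt{r}$ shows that $\phi$ is strictly decreasing on $(0,1)$. Both $f$ and $\phi$ are strictly positive, so their product $g$ is strictly decreasing, as differentiating the product makes plain.

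It then remains to compute the endpoint values. As $r\to1^-$ one has $f(r)\to4/\pi$ while $t=1-\sqrt{r}\to0^+$, so $\phi(r)\to\lim_{t\to0^+}\arctanh(t)/t=1$ and hence $g(1^-)=4/\pi$. As $r\to0^+$ one has $f(r)\to\pi$ while $t\to1^-$, so $\arctanh(t)\to\infty$ and $\phi(r)\to\infty$, giving $g(0^+)=\infty$. Continuity together with strict monotonicity then forces $g$ to map $(0,1)$ onto $(4/\pi,\infty)$.

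There is no serious obstacle here: the genuinely analytic content, namely the monotonicity of $(1-\sqrt{r})^2\psi(r)/r$, has already been established in Theorem~\ref{mythm2}, and the remaining factor is elementary. The only point requiring a little care is the lower endpoint $r\to0^+$, where one multiplies the finite limit $f\to\pi$ against the divergent factor $\phi\to\infty$; recognizing that the divergence comes solely from $\arctanh(t)\to\infty$ as $t\to1^-$ makes the claim $g(0^+)=\infty$ immediate.
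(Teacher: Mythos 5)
Your proposal is correct and takes essentially the same route as the paper: the identical factorization $g(r)=f(r)\cdot\arctanh(1-\sqrt{r})/(1-\sqrt{r})$ with $f$ as in Theorem~\ref{mythm2}, monotonicity of the product of two positive decreasing factors, and the same endpoint computation. The only difference is that you prove the monotonicity of $\arctanh(t)/t$ via its Maclaurin expansion, a fact the paper simply asserts.
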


\begin{proof}
This follows from Theorem \ref{mythm2}, since $g(r)=f(r)\arctanh(1-\sqrt{r})/(1-\sqrt{r})$ where $f(r)$ is as in Theorem \ref{mythm2} and $\arctanh(1-\sqrt{r})/(1-\sqrt{r})$ is strictly decreasing from $(0,1)$ onto $(1,\infty)$.
\end{proof}

Since the bounds for $\psi$ in (\ref{advinequal}) and the Theorem \ref{mythm2} are not comparable in the whole interval $(0,1)$, we could combine them to get the following inequalities:

\begin{corollary}
For $0<r<1$,
$$\max\left\{\dfrac{\pi r}{(1-r)^2},\dfrac{4r}{\pi(1-\sqrt{r})^2}\right\}<\psi(r)<\min\left\{\dfrac{16r}{\pi(1-r)^2},\dfrac{\pi r}{(1-\sqrt{r})^2}\right\}.$$
\end{corollary}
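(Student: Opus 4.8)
The plan is to obtain this corollary simply by conjoining the two pairs of two-sided bounds already at our disposal, namely the inequality \eqref{advinequal} from \cite{adv} and the inequality of Theorem \ref{mythm2}. First I would record that \eqref{advinequal} gives, for every $r\in(0,1)$,
\[
\frac{\pi r}{(1-r)^2}<\psi(r)<\frac{16r}{\pi(1-r)^2},
\]
while Theorem \ref{mythm2} gives, also for every $r\in(0,1)$,
\[
\frac{4r}{\pi(1-\sqrt{r})^2}<\psi(r)<\frac{\pi r}{(1-\sqrt{r})^2}.
\]
Both pairs are valid simultaneously on the whole interval, so $\psi(r)$ strictly exceeds each of the two lower bounds and is strictly smaller than each of the two upper bounds.

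The key step is then the elementary observation that a number which strictly exceeds two quantities strictly exceeds their maximum, and a number strictly below two quantities is strictly below their minimum. Applying this to the lower bounds $\pi r/(1-r)^2$ and $4r/(\pi(1-\sqrt{r})^2)$ yields the left-hand inequality, and applying it to the upper bounds $16r/(\pi(1-r)^2)$ and $\pi r/(1-\sqrt{r})^2$ yields the right-hand inequality. Strictness is preserved in both passages, since the maximum (respectively the minimum) is attained by one of the two competing expressions, and $\psi(r)$ is strictly separated from that one.

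There is essentially no analytic obstacle here: the entire content is the logical conjunction of two previously established two-sided estimates together with the trivial fact about maxima and minima. The only point worth flagging — and it is what makes the combined statement a genuine sharpening rather than a mere restatement — is that, as noted in the remark preceding the corollary, neither pair of bounds dominates the other throughout $(0,1)$. If desired, I would confirm this by comparing $(1-r)^2$ with $(1-\sqrt{r})^2$ and tracking the constants $\pi$, $16/\pi$ and $4/\pi$ across the interval, but this comparison is not needed to prove the displayed inequalities themselves.
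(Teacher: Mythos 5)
Your proposal is correct and matches the paper's reasoning exactly: the paper offers no separate proof, simply noting that the two non-comparable two-sided bounds from \eqref{advinequal} and Theorem \ref{mythm2} can be conjoined, which is precisely your max/min combination argument.
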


\medskip
\begin{figure}[h]
\begin{minipage}[t]{0.45\linewidth}
\centering
\includegraphics[width=7cm]{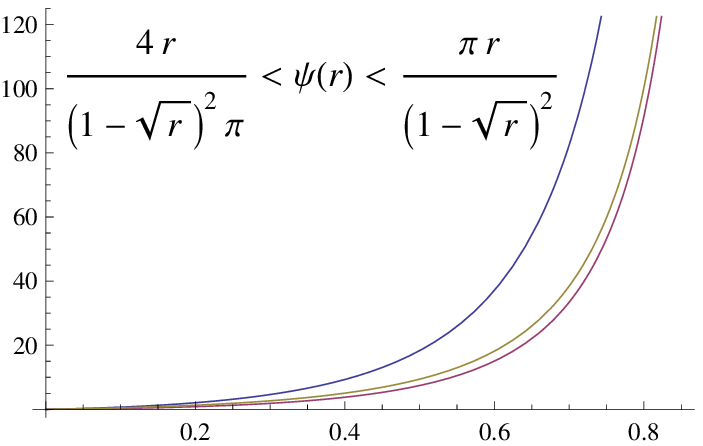}
\end{minipage}
\hspace{0.5cm}
\begin{minipage}[t]{0.45\linewidth}
\centering
\includegraphics[width=7cm]{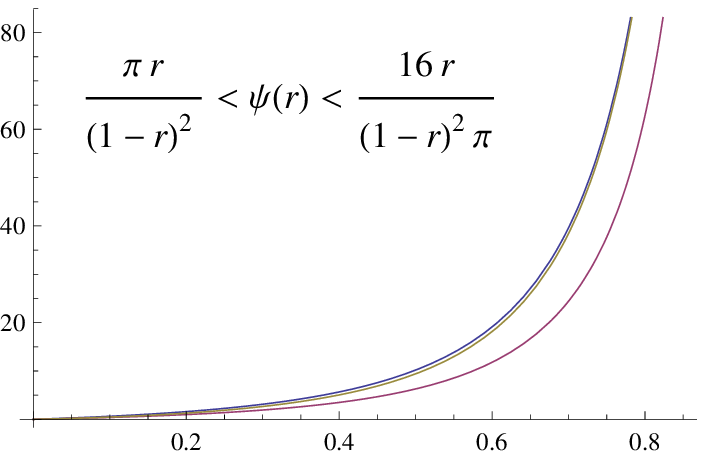}
\end{minipage}
\end{figure}

\medskip

\begin{figure}[h]
\begin{minipage}[t]{0.45\linewidth}
\centering
\includegraphics[width=6.8cm]{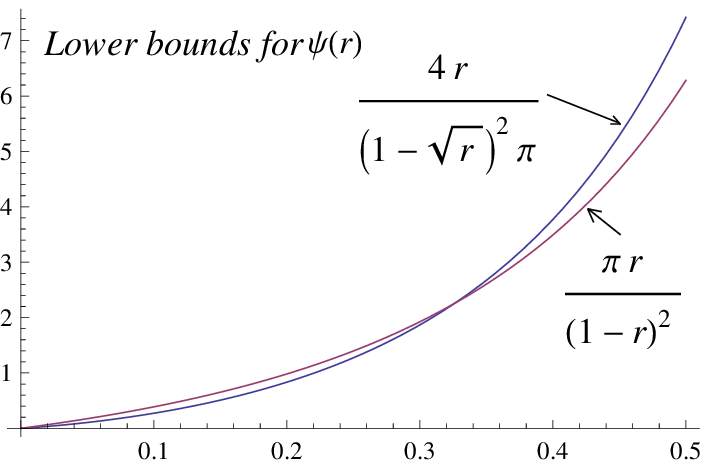}
\end{minipage}
\hspace{0.5cm}
\begin{minipage}[t]{0.45\linewidth}
\centering
\includegraphics[width=7cm]{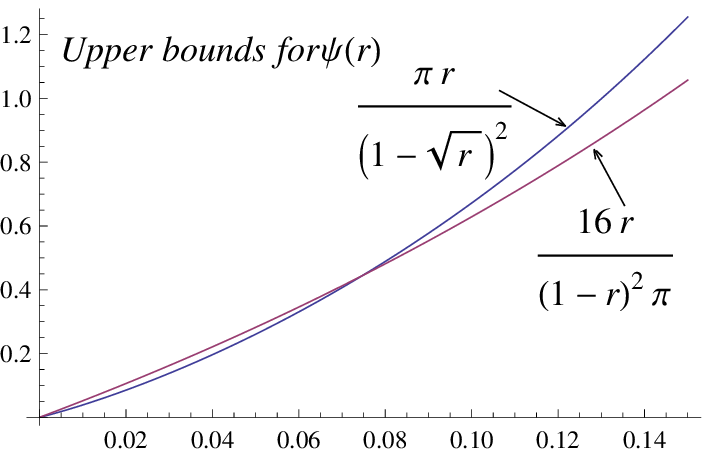}
\end{minipage}
\end{figure}
\medskip

\begin{proof}[\bf Proof of Theorem \ref{mythm3}]
Let $r=1/\ch(x)$ and $s=1/\ch(y)$. Then $dr/dx=-\sh(x)/\ch^2(x)=-rr'$ and
$$f'(x)=-\pi rr'\dfrac{1-r}{1+r}\dfrac{1}{(\E'-r\K')^2}=-\pi g(r),$$
where $g(r)=rr'(1-r)/((1+r)(\E'-r\K')^2)$. By the change of variable $r=(1-t)/(1+t)$ and using Landen's transformations (\ref{transfk2}) and (\ref{transfe2}), we have
$$g\left(\dfrac{1-t}{1+t}\right)=\dfrac{1-t}{2}\frac{t^{3/2}}{(\E-t'^2\K)^2}$$
which is decreasing in $t$ by Lemma \ref{lemma}(1), and consequently, $f'(x)$ is increasing in $x$. Therefore, $f$ is decreasing and convex on $(0,\infty)$. In particular, we have $f((x+y)/2)\leq(f(x)+f(y))/2$, with equality if and only if  $x=y$. Now
$$\ch^2\left(\dfrac{x+y}{2}\right)=\dfrac{1+rs+r's'}{2rs}.$$
Hence
$$f\left(\frac{x+y}2\right)\leq\frac{f(x)+f(y)}2$$
gives
$$\psi(r)+\psi(s)\geq2\psi\left(\frac{\sqrt{2rs}}{\sqrt{1+rs+r's'}}\right),$$
with equality if and only if $r=s$.
\end{proof}

\begin{remark}
It is clear that $f(x)$ is decreasing and $2f(x+y)\leq f(x)+f(y)$.
Since $$\ch(x+y)=\dfrac{1+r's'}{rs},$$
we have
$$2\psi\left(\dfrac{rs}{1+r's'}\right)\leq\psi(r)+\psi(s)$$
which is weaker than the inequality (\ref{myfunineq}).
\end{remark}

A function $f: I\to J$ is called $H_{p,q}-$convex (concave) if it satisfies
$$f(H_{p}(x,y))\leq(\geq)H_q(f(x),f(y))$$
for all $x,y\in I$ and strictly $H_{p,q}-$convex (concave) if the inequality is strict, except for $x=y$. Recently, many authors investigated the $H_{p,q}-$convexity (concavity) of special functions, see \cite{avv2, balpv, ba, bapv, cwzq, wzj}. The following theorems give some functional inequalities by studying the generalized convexity (concavity) of the function $\psi$.

\begin{theorem} \label{mythm4}
The function $f(x)=\log(1/\psi(e^{-x}))$ is strictly increasing and concave from $(0,\infty)$ onto $(-\infty,\infty)$. In particular, for $r,s\in(0,1)$,
$$\psi(\sqrt{rs})\leq\sqrt{\psi(r)\psi(s)}$$
with equality if and only if $r=s$.
\end{theorem}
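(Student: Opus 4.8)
The plan is to reduce everything to the monotonicity properties already recorded in Section~2 after the change of variable $r=e^{-x}$. First I would note that, since $\psi:(0,1)\to(0,\infty)$ is a strictly increasing homeomorphism by Theorem \ref{mythm1}, the map $r=e^{-x}$ carries $(0,\infty)$ onto $(0,1)$ in a decreasing fashion, with $\psi(e^{-x})\to\infty$ as $x\to0^+$ and $\psi(e^{-x})\to0$ as $x\to\infty$. Hence $f(x)=-\log\psi(e^{-x})$ runs from $-\infty$ to $+\infty$, which settles the range once monotonicity is known. Differentiating and writing $r=e^{-x}$ gives
$$f'(x)=\frac{r\,\psi'(r)}{\psi(r)},$$
which is positive because $\psi,\psi'>0$; thus $f$ is strictly increasing.

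For the concavity I would observe that, since $x\mapsto r=e^{-x}$ is decreasing, $f$ is strictly concave precisely when the elasticity $E(r):=r\psi'(r)/\psi(r)$ is strictly increasing in $r$; indeed $f''(x)=-r\,E'(r)$. The crux is to recognize that $E(r)$ factors into pieces already analyzed. Using the formula (\ref{psider}) for $\psi'$ together with the definition (\ref{psi}) of $\psi$, a short computation gives
$$E(r)=\frac{\pi r(1-r)}{2(1+r)\bigl(\E-(1-r)\K\bigr)\bigl(\E'-r\K'\bigr)}=\frac{\pi}{2}\,\frac{r}{\E-(1-r)\K}\,\frac{1-r}{(1+r)(\E'-r\K')}.$$

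Now the two factors are exactly the reciprocals of $f_2$ and $f_7$ from Lemma \ref{mylemma}: the first equals $1/f_2(r)$ and the second equals $1/f_7(r)$. Since $f_2$ and $f_7$ are both strictly decreasing and positive on $(0,1)$ by Lemma \ref{mylemma}(2) and (7), their product is strictly decreasing and positive, so $E(r)=\tfrac{\pi}{2}/(f_2(r)f_7(r))$ is strictly increasing. Consequently $f''(x)=-r\,E'(r)<0$, and $f$ is strictly concave as claimed. I expect this factorization to be the only genuine obstacle; everything else is bookkeeping.

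Finally, to obtain the stated inequality I would apply midpoint concavity of $f$. Writing $r=e^{-x}$, $s=e^{-y}$ with $x,y>0$, so that $e^{-(x+y)/2}=\sqrt{rs}$, the inequality $f((x+y)/2)\ge(f(x)+f(y))/2$ becomes $-\log\psi(\sqrt{rs})\ge-\tfrac12\log(\psi(r)\psi(s))$, i.e. $\psi(\sqrt{rs})\le\sqrt{\psi(r)\psi(s)}$. Strict concavity forces equality only when $x=y$, that is, when $r=s$.
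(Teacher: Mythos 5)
Your proposal is correct and follows essentially the same route as the paper: the same change of variable $r=e^{-x}$, the same computation showing $f'(x)=\frac{\pi}{2}\,\frac{r}{\E-(1-r)\K}\,\frac{1-r}{(1+r)(\E'-r\K')}$, the same identification of the two factors as $1/f_2$ and $1/f_7$ from Lemma \ref{mylemma}(2),(7), and the same midpoint-concavity conclusion. Your only additions are cosmetic: phrasing the concavity via the elasticity $E(r)$ and its sign of $f''$ (note that strict concavity is cleanest via $f'$ being strictly decreasing, which you also state), and explicitly checking the limiting values for the range, which the paper leaves implicit.
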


\begin{proof}
Let $r=e^{-x}$ and $s=e^{-y}$. Then $dr/dx=-r$ and
$$f'(x)=\dfrac{r\psi'(r)}{\psi(r)}=\frac{\pi}{2}\dfrac{r}{\E-(1-r)\K}\dfrac{1-r}{(1+r)(\E'-r\K')}$$
which is positive and increasing in $r$ by Lemma \ref{mylemma}(2) and (7), hence decreasing in $x$.
Therefore, $f$ is strictly increasing and concave on $(0,\infty)$. In particular, we have $f((x+y)/2)\geq(f(x)+f(y))/2$, with equality if and only if  $x=y$. This gives
$$\psi(\sqrt{rs})\leq\sqrt{\psi(r)\psi(s)}$$
with equality if and only if $r=s$.
\end{proof}

\begin{proof}[\bf Proof of Theorem \ref{mythm5}]
For $p=0$, the inequality is from Theorem \ref{mythm4}. Now we assume that $p\neq0$.
Let $0<x<y<1$ and $t=((x^p+y^p)/2)^{1/p}>x$. Define
$$f(x)=\psi(t)^p-\dfrac{\psi(x)^p+\psi(y)^p}{2}.$$
By differentiation, we have $dt/dx=\frac{1}{2}(x/t)^{p-1}$ and
\begin{eqnarray*}
f'(x)&=&\dfrac12p\psi(t)^{p-1}\psi'(t)\left(\dfrac{x}{t}\right)^{p-1}-\dfrac12p\psi(x)^{p-1}\psi'(x)\\
&=&\dfrac{p}2x^{p-1}\left(\left(\dfrac{\psi(t)}{t}\right)^{p-1}\psi'(t)-\left(\dfrac{\psi(x)}{x}\right)^{p-1}\psi'(x)\right).
\end{eqnarray*}

We first consider the case of $p>0$. Previous calculation gives
\begin{eqnarray*}
f'(x)&=&\dfrac{p}2x^{p-1}\left(\left(\dfrac{\psi(t)}{t}\right)^{-1}\psi'(t)\left(\dfrac{\psi(t)}{t}\right)^{p}-\left(\dfrac{\psi(x)}{x}\right)^{-1}\psi'(x)\left(\dfrac{\psi(x)}{x}\right)^{p}\right)\\
&=&\frac{{\pi}p\,x^{p-1}}{4}\left(\dfrac{t}{\E(t)-(1-t)\K(t)}\dfrac{1-t}{(1+t)(\E'(t)-t\K'(t))}\left(\dfrac{\psi(t)}{t}\right)^{p}\right.\\
& &\qquad\left.-\dfrac{x}{\E(x)-(1-x)\K(x)}\dfrac{1-x}{(1+x)(\E'(x)-x\K'(x))}\left(\dfrac{\psi(x)}{x}\right)^{p}\right)
\end{eqnarray*}
which is positive by Lemma \ref{mylemma}(2),(7) and Theorem \ref{mythm1} since $t>x$ and $p>0$. Hence $f$ is strictly increasing and $f(x)<f(y)=0$. This implies that
$$\psi\left(\left(\frac{x^p+y^p}{2}\right)^{1/p}\right)\leq\left(\frac{\psi(x)^p+\psi(y)^p}{2}\right)^{1/p}.$$

For the case of $p\leq-1$, by previous calculation we have
$$f'(x)
=\dfrac{p}2x^{p-1}\left(\left(\dfrac{\psi(t)}{t}\right)^{-2}\psi'(t)\left(\dfrac{\psi(t)}{t}\right)^{p+1}
-\left(\dfrac{\psi(x)}{x}\right)^{-2}\psi'(x)\left(\dfrac{\psi(x)}{x}\right)^{p+1}\right).$$
Since $(\psi(x)/x)^{p+1}$ is decreasing, we only need to prove $(\psi(x)/x)^{-2}\psi'(x)$ is strictly decreasing in $(0,1)$. In fact,
with the change of variable $x\mapsto(1-t)/(1+t)$,
$$
\left(\dfrac{\psi(x)}{x}\right)^{-2}\psi'(x)=\dfrac{\pi}{4}\left(\dfrac{x(1-x)}{x'(\E-(1-x)\K)}\right)^2=\dfrac{\pi}{4}\left(\dfrac{t'^2}{\E'-t^2\K'}\dfrac{\sqrt{t}}{1+t}\right)^2
$$
which is a product of two positive and strictly increasing functions of $t$ by Lemma \ref{lemma}(1). Hence $f'(x)>0$ and $f$ is strictly increasing in $(0,1)$. Now we have $f(x)<f(y)=0$, and consequently
$$\psi\left(\left(\frac{x^p+y^p}{2}\right)^{1/p}\right)\geq\left(\frac{\psi(x)^p+\psi(y)^p}{2}\right)^{1/p}$$
since $p$ is negative.

The equality case is obvious. This completes the proof.
\end{proof}

\medskip

\section{Applications}
In this section we always denote $R=[0,1]\times[0,b]$. Let $\Gamma_b$ and $\Delta_b$ be the families of curves joining the opposite sides of length $b$ of the rectangle $R$, in the exterior and interior of the rectangle, respectively. It is well-known that $\mathcal{M}(\Delta_b)=b$. By the formula of Duren and Pfaltzgraff (\ref{dpformula}), we have
$$\mathcal{M}(\Gamma_b)=\dfrac{1}{\pi}\mu(\psi^{-1}(1/b)).$$
Setting $r=\sqrt{2}-1$ in (\ref{idt2}), we get $\K'(3-2\sqrt{2})=2\K(3-2\sqrt{2})$. By Corollary \ref{specvalu},
$$\mathcal{M}(\Gamma_1)=\dfrac{1}{\pi}\mu(\psi^{-1}(1))=1=\mathcal{M}(\Delta_1).$$
Now we will study the behavior of the modulus $\mathcal{M}(\Gamma_b)$ with respect to the sides of length $b$. The following Theorem \ref{modcomp} shows
\beq
\left\{\begin{array}{ll}
\mathcal{M}(\Gamma_b)>\mathcal{M}(\Delta_b), &\mbox{for} \quad 0<b<1,\\
\mathcal{M}(\Gamma_b)<\mathcal{M}(\Delta_b), &\mbox{for} \quad b>1.
\end{array}\right.
\eeq

\begin{theorem}\label{modcomp}
There exists a number $r_0=8.24639\ldots$ such that the function
$$f(r)=\dfrac{1}{\pi}\mu(\psi^{-1}(r))-\frac{1}{r}$$
is strictly increasing in $(0,r_0)$ and decreasing in $(r_0,\infty)$, with the limiting value $f(\infty)=0$. In particular,
$$\dfrac{1}{\pi}\mu(\psi^{-1}(r))<\frac{1}{r}, \quad \mbox{for} \quad 0<r<1,$$
and
$$\dfrac{1}{\pi}\mu(\psi^{-1}(r))>\frac{1}{r}, \quad \mbox{for} \quad r>1.$$
\end{theorem}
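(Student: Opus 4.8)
The plan is to remove the awkward inverse $\psi^{-1}$ by the substitution $t=\psi^{-1}(r)$, i.e.\ $r=\psi(t)$ with $t\in(0,1)$. Since $\psi$ is a strictly increasing homeomorphism of $(0,1)$ onto $(0,\infty)$ by Theorem \ref{mythm1}, the monotonicity of $f$ in $r$ is identical to that of
$$g(t)=\frac1\pi\mu(t)-\frac1{\psi(t)},\qquad t\in(0,1),$$
in $t$, and the turning point $r_0$ is the image $\psi(t_0)$ of the turning point $t_0$ of $g$. Thus it suffices to analyze $g$ on $(0,1)$.

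First I would differentiate $g$. Using the standard formula $\mu'(t)=-\pi^2/(4t\,t'^2\K^2)$ (which follows from (\ref{derivative}) and Legendre's relation (\ref{legendre}); cf.\ \cite{avv}), together with the expression (\ref{psider}) for $\psi'$ and the definition (\ref{psi}) of $\psi$, one gets
$$g'(t)=\frac1\pi\mu'(t)+\frac{\psi'(t)}{\psi(t)^2}=-\frac{\pi}{4t\,t'^2\K^2}+\frac{\pi}{1-t^2}\frac{(1-t)^2}{(\E'-t\K')^2}\frac1{\psi(t)^2}.$$
The decisive simplification is that the factor $\psi(t)^{-2}=(\E'-t\K')^2/\bigl(4(\E-(1-t)\K)^2\bigr)$ cancels the $(\E'-t\K')^{-2}$ coming from $\psi'$, so that all dependence on $\E'-t\K'$ disappears and, using $t'^2=(1-t)(1+t)$,
$$g'(t)=\frac{\pi}{4(1+t)}\left(\frac{1-t}{(\E-(1-t)\K)^2}-\frac1{t(1-t)\K^2}\right).$$

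Next I would read off the sign. Since $\E-(1-t)\K>0$ on $(0,1)$ by Lemma \ref{mylemma}(1), the bracket is positive if and only if $t(1-t)^2\K^2>(\E-(1-t)\K)^2$, i.e.\ if and only if $\sqrt{t}(1-t)\K>\E-(1-t)\K$, which is exactly $f_8(t)<1$ in the notation of Lemma \ref{mylemma}(8). That lemma asserts that $f_8$ is strictly increasing on $(0,1)$ with the single root $f_8(t_0)=1$ at $t_0=0.479047\ldots$. Hence $g'>0$ on $(0,t_0)$ and $g'<0$ on $(t_0,1)$, so $g$, and therefore $f$, increases then decreases, with turning point $r_0=\psi(t_0)=8.24639\ldots$. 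The limiting value $f(\infty)=\lim_{t\to1^-}g(t)=0$ follows at once from $\mu(1^-)=0$ and $\psi(1^-)=\infty$.

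Finally I would deduce the two inequalities. By Corollary \ref{specvalu} one has $\psi^{-1}(1)=3-2\sqrt2$, and the relation $\K'(3-2\sqrt2)=2\K(3-2\sqrt2)$ noted before the theorem gives $\mu(3-2\sqrt2)=\pi$, so $f(1)=0$. Since $r_0>1$, $f$ is strictly increasing on $(0,1]$, whence $f(r)<f(1)=0$ for $0<r<1$; this is the first inequality. For $r>1$ I split at $r_0$: on $(1,r_0)$ the increase from $f(1)=0$ gives $f>0$, while on $(r_0,\infty)$ the function decreases to $f(\infty)=0$ and therefore stays positive; this is the second inequality. The main obstacle is the algebraic collapse in the derivative of $g$: one must spot that pairing $\psi'$ with $\psi^{-2}$ annihilates every $\E'-t\K'$ term and reduces the entire sign question to the single monotone quantity $f_8$ already analyzed in Lemma \ref{mylemma}(8). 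Once this is seen, the remainder is bookkeeping.
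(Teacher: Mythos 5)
Your proof is correct and follows essentially the same route as the paper: the paper also works through the substitution $s=\psi^{-1}(r)$, combines the derivative formulas for $\mu$ and $\psi$ so that the $\E'-s\K'$ terms cancel, reduces the sign of the derivative to the condition $f_8(s)\lessgtr 1$ of Lemma \ref{mylemma}(8), and concludes from $f(1)=f(\infty)=0$. The only (immaterial) difference is that you differentiate $g(t)=f(\psi(t))$ in $t$ while the paper differentiates $f$ in $r$; the two derivatives agree up to the positive factor $\psi'(t)$.
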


\begin{proof}
Let $s=\psi^{-1}(r)$. Then $r=\psi(s)$ and, by the derivative formula (\ref{psider}),
$$\dfrac{ds}{dr}=(\dfrac{dr}{ds})^{-1}=\dfrac{s'^2}{\pi}\left(\dfrac{\E'(s)-s\K'(s)}{1-s}\right)^2.$$
By differentiation and
$$\dfrac{d\mu(s)}{ds}=\dfrac{-\pi^2}{4ss'^2\K(s)^2},$$
we have
\begin{eqnarray*}
f'(r)&=&\dfrac{1}{\pi}\dfrac{d\mu}{ds}\dfrac{ds}{dr}+\dfrac{1}{r^2}\\
&=&\dfrac{1}{r^2}-\dfrac{1}{4s\K(s)^2}\left(\dfrac{\E'(s)-s\K'(s)}{1-s}\right)^2\\
&=&\dfrac{1}{\psi(s)^2}\left(1-\left(\dfrac{\E(s)-(1-s)\K(s)}{\sqrt{s}(1-s)\K(s)}\right)^2\right).
\end{eqnarray*}
which is positive in $(0,r_0)$ and negative in $(r_0,\infty)$ with $r_0=\psi(0.479047\ldots)=8.24639\ldots$ by Lemma \ref{mylemma}(8).
Hence $f$ is strictly increasing in  $(0,r_0)$ and decreasing in $(r_0,\infty)$. Since $f(1)=0$ and $f(\infty)=0$,
we have $f(r)<0$ for $r\in(0,1)$ and $f(r)>0$ for $r\in(1,\infty)$.
\end{proof}

The next theorem shows that the modulus $\mathcal{M}(\Gamma_b)$ has a logarithmic growth with respect to the length of side $b$.

\begin{theorem}
For $b\in(0,\infty)$,
\beq\label{modulusest}
L(b)<\mathcal{M}(\Gamma_b)<U(b),
\eeq
where
\begin{eqnarray}\label{lb4mod}
L(b)&:=&\dfrac{2}{\pi}\left(1-\left(1+\sqrt{4b/\pi}\right)^{-4}\right)^{1/4}\,\log\left(2\left(1+\sqrt{{4b}/{\pi}}\right)\right)\nonumber\\
&>&\dfrac{2}{\pi}\left(1-\left(1+\sqrt{4b/\pi}\right)^{-1}\right)\,\log\left(2\left(1+\sqrt{{4b}/{\pi}}\right)\right),
\end{eqnarray}
and
\begin{eqnarray}\label{ub4mod}
U(b)&:=&\dfrac{1}{\pi}\log\left(2\left(1+\sqrt{\pi b}\right)^2\left(1+\sqrt{1-\left(1+\sqrt{\pi b}\right)^{-4}}\right)\right)\nonumber\\
&<&\dfrac{2}{\pi}\log\left(2\left(1+\sqrt{\pi b}\right)\right).
\end{eqnarray}

\end{theorem}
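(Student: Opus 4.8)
The plan is to transfer everything to the Gr\"otzsch modulus $\mu$. Writing $s=\psi^{-1}(1/b)\in(0,1)$, which is well defined by Theorem \ref{mythm1}, the relation $\mathcal{M}(\Gamma_b)=\frac1\pi\mu(s)$ recorded in the discussion preceding Theorem \ref{modcomp} reduces the problem to estimating $\mu(s)$. Since $\mu$ is a strictly decreasing homeomorphism of $(0,1)$ onto $(0,\infty)$, my strategy is to pinch $s$ between two explicit algebraic functions of $b$, apply monotonicity of $\mu$, and then feed the resulting endpoints into two sharp elementary estimates for $\mu$ that are tailored to reproduce $U(b)$ and $L(b)$ exactly.

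For the localisation of $s$ I would use only Theorem \ref{mythm2}, namely $\frac{4s}{\pi(1-\sqrt s)^2}<\psi(s)<\frac{\pi s}{(1-\sqrt s)^2}$. Substituting $\psi(s)=1/b$ and taking square roots (legitimate since all quantities are positive and $1-\sqrt s>0$) turns each half into a linear inequality in $\sqrt s$: the upper bound for $\psi$ gives $\sqrt s(1+\sqrt{\pi b})>1$, i.e. $s>c^{-2}$ with $c:=1+\sqrt{\pi b}$, while the lower bound gives $\sqrt s(\sqrt\pi+2\sqrt b)<\sqrt\pi$, i.e. $s<d^{-2}$ with $d:=1+\sqrt{4b/\pi}$. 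Hence $c^{-2}<s<d^{-2}$, and since $\mu$ is decreasing, $\mu(d^{-2})<\mu(s)<\mu(c^{-2})$.

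The decisive step invokes two estimates for the Gr\"otzsch modulus: $\mu(t)<\log\frac{2(1+t')}{t}$ and $\mu(t)>\sqrt{t'}\,\log\frac4t$ for $t\in(0,1)$. Evaluating the first at $t=c^{-2}$, where $t'=\sqrt{1-c^{-4}}$, gives $\mu(s)<\mu(c^{-2})<\log(2c^2(1+\sqrt{1-c^{-4}}))$, which is precisely $\pi U(b)$. Evaluating the second at $t=d^{-2}$, where $\sqrt{t'}=(1-d^{-4})^{1/4}$ and $\log\frac4t=2\log(2d)$, gives $\mu(s)>\mu(d^{-2})>2(1-d^{-4})^{1/4}\log(2d)=\pi L(b)$. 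The two auxiliary inequalities in (\ref{lb4mod}) and (\ref{ub4mod}) are then purely elementary: the one in (\ref{ub4mod}) is equivalent to $\sqrt{1-c^{-4}}<1$, and the one in (\ref{lb4mod}) to $(1-d^{-4})^{1/4}>1-d^{-1}$, which follows from the factorisation $1-d^{-4}=(1-d^{-1})(1+d^{-1})(1+d^{-2})$ after cancelling the factor $1-d^{-1}>0$.

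The main obstacle is securing the two $\mu$-estimates with exactly the right shape, since any weaker constant would overshoot $U(b)$ or undershoot $L(b)$. The lower estimate $\mu(t)>\sqrt{t'}\,\log\frac4t$ I would derive from the tools at hand: Lemma \ref{lemma}(2), applied with exponent $1/2$, yields $\sqrt{t'}\,\K(t)\le\pi/2$, while the classical inequality $\K(t)>\log\frac4{t'}$, equivalently $\K'(t)>\log\frac4t$, bounds the numerator of $\mu(t)=\frac\pi2\K'(t)/\K(t)$ from below; combining the two gives the claim. The upper estimate $\mu(t)<\log\frac{2(1+t')}{t}$ is a known sharp bound for the modulus of the Gr\"otzsch ring (see \cite{avv},\cite{lv}), which I would cite after checking that its leading behaviour $\log(4/t)$ as $t\to0$ agrees with that of $\mu$, confirming it is the correct sharp form.
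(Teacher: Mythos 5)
Your proposal is correct and takes essentially the same route as the paper: pinch $s=\psi^{-1}(1/b)$ between $(1+\sqrt{\pi b})^{-2}$ and $(1+\sqrt{4b/\pi})^{-2}$ via Theorem \ref{mythm2}, then combine this with the bounds $\sqrt{s'}\log(4/s)<\mu(s)<\log(2(1+s')/s)$, which the paper simply cites from \cite[Theorem 5.13(4),(5)]{avv}. The only cosmetic differences are that you re-derive the lower $\mu$-bound from Lemma \ref{lemma}(2) together with the classical inequality $\K'(t)>\log(4/t)$ instead of citing it, and you justify the auxiliary inequality in (\ref{lb4mod}) by factoring $1-d^{-4}$ rather than invoking the paper's generic inequality $1-a^x>(1-a)^x$ for $a\in(0,1)$, $x>1$.
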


\begin{proof}
By Theorem \ref{mythm2} we have
$$\left(\dfrac{\sqrt{r}}{\sqrt{\pi}+\sqrt{r}}\right)^2<s=\psi^{-1}(r)<\left(\dfrac{\sqrt{r}}{\sqrt{4/\pi}+\sqrt{r}}\right)^2,\quad r\in(0,\infty).$$
By \cite[Theorem 5.13(4),(5)]{avv},
$$\sqrt{s'}\log{\dfrac{4}{s}}<\mu(s)<\log{\dfrac{2(1+s')}{s}},\quad s\in(0,1).$$
Combining the above inequalities and replacing $r$ with $1/b$, we get the inequalities (\ref{modulusest}).
The inequality (\ref{lb4mod}) follows from the inequality $1-a^x>(1-a)^x$ for $a\in(0,1)$ and $x\in(1,\infty)$.
The inequality (\ref{ub4mod}) is obvious.
\end{proof}

\medskip
\begin{figure}[h]
\begin{minipage}[t]{0.45\linewidth}
\centering
\includegraphics[width=8cm]{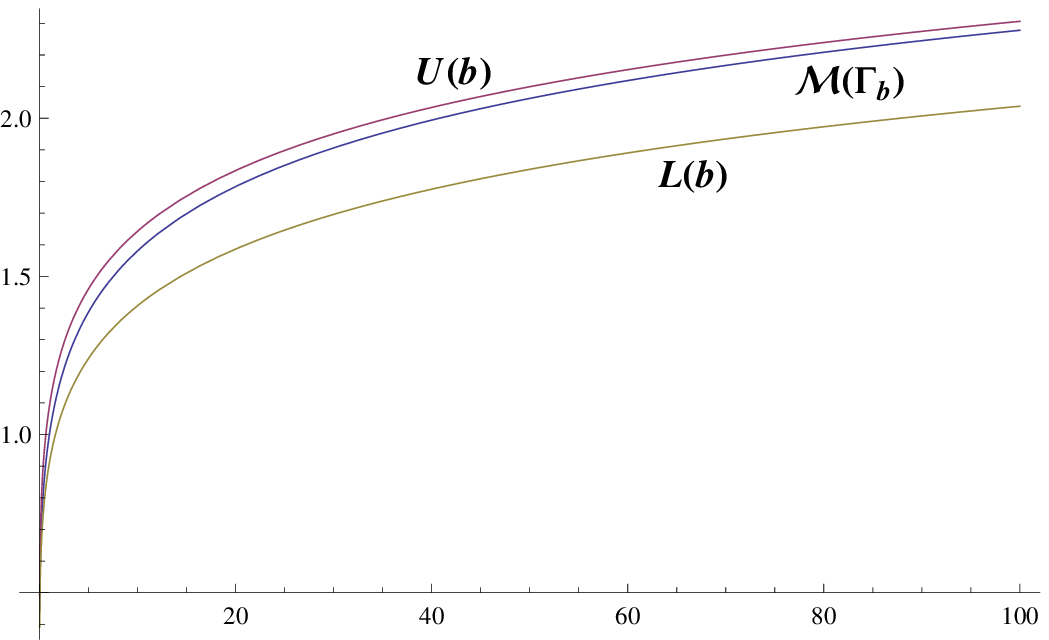}
\end{minipage}
\end{figure}

\begin{theorem}
For $a,b\in(0,\infty)$,
\begin{enumerate}
\item $\mathcal{M}(\Gamma_{2ab/(a+b)})\leq\sqrt{\mathcal{M}(\Gamma_a)\mathcal{M}(\Gamma_b)}\leq
    \dfrac{\mathcal{M}(\Gamma_a)+\mathcal{M}(\Gamma_b)}{2}\leq\mathcal{M}(\Gamma_{(a+b)/2})$;\\
\item $\left\{\begin{array}{ll}
\mathcal{M}(\Gamma_{H_p(a,b)})\leq H_p(\mathcal{M}(\Gamma_a),\mathcal{M}(\Gamma_b)),&p\leq-1,\vspace{1mm}\\
\mathcal{M}(\Gamma_{H_p(a,b)})\geq H_p(\mathcal{M}(\Gamma_a),\mathcal{M}(\Gamma_b)),&p\geq1.\\
\end{array}\right.$
\end{enumerate}
Equality holds in each case if and only if $a=b$.
\end{theorem}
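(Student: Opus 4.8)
The plan is to prove part (2) first by a monotonicity argument modelled on the proof of Theorem \ref{mythm5}, and then to deduce part (1) from the two endpoint cases $p=\pm1$ together with the elementary comparison of power means. Throughout write $\Phi(b)=\mathcal{M}(\Gamma_b)=\frac1\pi\mu(\psi^{-1}(1/b))$ and $s=s(b)=\psi^{-1}(1/b)$, so that $b\mapsto s$ is a strictly decreasing bijection of $(0,\infty)$ onto $(0,1)$.

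The first step is to record two derivative identities. Reusing the computation in the proof of Theorem \ref{modcomp} (the formulas there for $ds/dr$ and $d\mu/ds$), I would differentiate $\Phi(b)=\Theta(1/b)$ with $\Theta(t)=\frac1\pi\mu(\psi^{-1}(t))$ and obtain the clean formula
$$
\Phi'(b)=f_8(s)^2,\qquad s=\psi^{-1}(1/b),
$$
where $f_8$ is the strictly increasing function of Lemma \ref{mylemma}(8); indeed the factor $1/b^2=\psi(s)^2$ cancels exactly the $1/\psi(s)^2$ appearing in that computation. Together with $\Phi(b)/b=\frac1\pi\,\mu(s)\psi(s)=\frac1\pi L(s)$, where $L=\mu\psi$ is strictly increasing by Lemma \ref{mylemmupsi}, these two formulas reduce everything to the monotonicity of expressions in $s$.

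For part (2), fix $0<a<b$, set $m=H_p(a,b)\in(a,b)$ (so $dm/da=\frac12(a/m)^{p-1}$), and define $F(a)=\Phi(m)^p-\frac12(\Phi(a)^p+\Phi(b)^p)$, which vanishes at $a=b$. A direct differentiation gives
$$
F'(a)=\tfrac{p}{2}a^{p-1}\bigl[\Lambda_p(m)-\Lambda_p(a)\bigr],\qquad \Lambda_p(b):=\Bigl(\tfrac{\Phi(b)}{b}\Bigr)^{p-1}\Phi'(b)=\Bigl(\tfrac1\pi L(s)\Bigr)^{p-1}f_8(s)^2 .
$$
Thus the whole theorem turns on the monotonicity of $\Lambda_p$. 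For $p\ge1$ the exponent $p-1\ge0$, so $\Lambda_p$ is a product of the two nonnegative increasing functions $L(s)^{p-1}$ and $f_8(s)^2$ of $s$, hence increasing in $s$ and decreasing in $b$; since $m>a$ this forces $F'<0$, so $F(a)>F(b)=0$ and $\Phi(m)>H_p(\Phi(a),\Phi(b))$, which is the concavity inequality. Presenting part (2) in this $F$-form is convenient because strict monotonicity of $f_8$ and $L$ yields the equality statement automatically.

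The main obstacle is the case $p\le-1$, where $\Lambda_p(s)=(\frac1\pi L)^{p-1}f_8^2$ is a product of the decreasing factor $L^{p-1}$ and the increasing factor $f_8^2$, so it is not monotone for free. Here I would regroup as
$$
\frac{L(s)^{1-p}}{f_8(s)^2}=\Bigl(\frac{L(s)}{f_8(s)}\Bigr)^2 L(s)^{-(1+p)},
$$
in which $-(1+p)\ge0$ exactly when $p\le-1$; the factor $L^{-(1+p)}$ is then increasing, and $L/f_8$ is increasing because
$$
\frac{L(s)}{f_8(s)}=\pi\,\sqrt{s}\,\K'(s)\,\frac{1-s}{\E'(s)-s\K'(s)}
$$
is a product of the increasing functions $\sqrt{s}\,\K'(s)$ (which equals $(v')^{1/2}\K(v)$ with $v=s'$, decreasing in $v$ by the endpoint case $c=1/2$ of Lemma \ref{lemma}(2), hence increasing in $s$) and $1/f_4(s)$ (increasing by Lemma \ref{mylemma}(4)). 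Hence $L^{1-p}/f_8^2$ is increasing in $s$, so $\Lambda_p$ is decreasing in $s$ and increasing in $b$; since $p<0$ this again gives $F'<0$, so $F(a)>0$ and $\Phi(m)<H_p(\Phi(a),\Phi(b))$, the convexity inequality. Finally, part (1) follows without new work: the middle inequality is the arithmetic–geometric mean inequality for the numbers $\mathcal{M}(\Gamma_a),\mathcal{M}(\Gamma_b)$; the right inequality is part (2) with $p=1$; and the left inequality is part (2) with $p=-1$ followed by the harmonic–geometric mean inequality $H_{-1}(\mathcal{M}(\Gamma_a),\mathcal{M}(\Gamma_b))\le H_0(\mathcal{M}(\Gamma_a),\mathcal{M}(\Gamma_b))$ applied to the values, with all equality cases inherited from part (2).
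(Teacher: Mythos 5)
Your proposal is correct, and it splits into two halves of different character. For part (2) you have essentially reproduced the paper's own argument in lightly reorganized form: the paper works with the same difference function, obtains the same derivative $\tfrac{p}{2}x^{p-1}\bigl[\Lambda_p(t)-\Lambda_p(x)\bigr]$, and identifies $\Lambda_p$ exactly as your $\bigl(\tfrac1\pi L(s)\bigr)^{p-1}f_8(s)^2$; moreover your regrouping for $p\le-1$, namely $(L/f_8)^2L^{-(1+p)}$ with $L/f_8=\pi\sqrt{s}\,\K'(s)\,\tfrac{1-s}{\E'(s)-s\K'(s)}$, is algebraically identical to the paper's rewriting $\bigl(\tfrac{\mu\psi}{\pi}\bigr)^{p+1}\tfrac{1}{u\K'(u)^2}\bigl(\tfrac{\E'-u\K'}{1-u}\bigr)^2$, and both rest on the same ingredients: Lemma \ref{mylemmupsi}, Lemma \ref{lemma}(2) with $c=1/2$, and Lemma \ref{mylemma}(4),(8). (Your clean identity $\Phi'(b)=f_8(s)^2$ is a correct and tidy repackaging of the computation in Theorem \ref{modcomp}.) Where you genuinely depart from the paper is part (1): you deduce both outer inequalities from part (2) at $p=\pm1$ combined with elementary power-mean comparisons (AM--GM, and $H_{-1}\le H_0$ applied to the values), whereas the paper proves them independently of part (2) --- the right-hand inequality via a chain through $\mu$, using $\tfrac{\mu(s)+\mu(t)}{2}\le\mu(\sqrt{st})$ from \cite[Theorem 5.12(1)]{avv} together with Theorem \ref{mythm5}, and the left-hand inequality via a separate logarithmic-differentiation argument showing that $m(a)=\mu(\psi^{-1}(a))/\pi$ is log-convex. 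Your route is more economical: it needs no external $\mu$-inequality, no appeal to Theorem \ref{mythm5}, and no extra convexity computation, and at the first inequality it even passes through the sharper intermediate bound $\mathcal{M}(\Gamma_{2ab/(a+b)})\le H_{-1}(\mathcal{M}(\Gamma_a),\mathcal{M}(\Gamma_b))$. What the paper's longer route buys is the log-convexity of $a\mapsto\mu(\psi^{-1}(a))$, a fact of independent interest that your argument does not produce. Your treatment of the equality cases also goes through, since $\Phi'=f_8^2>0$ makes $\Phi$ strictly increasing, so equality anywhere in the chains forces $a=b$.
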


\begin{proof}
In part (1), the second inequality is clear. For the third inequality, let $s=\psi^{-1}(1/a)$, $t=\psi^{-1}(1/b)$. Then
\begin{eqnarray*}
\dfrac{\mathcal{M}(\Gamma_a)+\mathcal{M}(\Gamma_b)}{2}&=&\dfrac{1}{\pi}\dfrac{\mu(s)+\mu(t)}{2}\\
&\leq&\dfrac{1}{\pi}\mu(\sqrt{st})\leq\dfrac{1}{\pi}\mu(H_{-1}(s,t))\\
&\leq&\dfrac{1}{\pi}\mu(\psi^{-1}(H_{-1}(\psi(s),\psi(t))))\\
&=&\dfrac{1}{\pi}\mu(\psi^{-1}(H_{-1}(1/a,1/b)))\\
&=&\mathcal{M}(\Gamma_{(a+b)/2}),
\end{eqnarray*}
where the first inequality follows from \cite[Theorem 5.12(1)]{avv} (also see \cite[Theorem]{wzj}) and the third inequality follows from  Theorem \ref{mythm5}.
Let $m(a)=\mu(\psi^{-1}(a))/\pi$ and $u=\psi^{-1}(a)$. By logarithmic differentiation, we have
$$\dfrac{d}{da}\log m(a)=-\dfrac{1}{2u\K'(u)\K(u)}\left(\dfrac{\E'(u)-u\K'(u)}{1-u}\right)^2,$$
which is strictly increasing in $u$ by Lemma \ref{lemma}(2) and Lemma \ref{mylemma}(4),  and hence strictly increasing in $a$. Since $m(a)$ is logarithmic convex,
we have
$$m\left(\dfrac{a+b}{2}\right)\leq\sqrt{m(a)m(b)},$$
which implies the first inequality in part (1) by replacing $a,b$ with $1/a,1/b$, respectively.

For the part (2), let $M(x):=\mathcal{M}(\Gamma_x)$.
Let $0<x<y<1$ and $t=((x^p+y^p)/2)^{1/p}>x$. Define
$$f(x)=M(t)^p-\dfrac{M(x)^p+M(y)^p}{2}.$$
By differentiation, we have $dt/dx=\frac{1}{2}(x/t)^{p-1}$ and
\beq\label{diffmod}
f'(x)=\dfrac{p}2x^{p-1}\left(\left(\dfrac{M(t)}{t}\right)^{p-1}M'(t)-\left(\dfrac{M(x)}{x}\right)^{p-1}M'(x)\right).
\eeq
Let $M(x)=m(a)$, then $a=1/x=\psi(u)$. Now we have
\begin{eqnarray*}
\left(\dfrac{M(x)}{x}\right)^{p-1}M'(x)&=&(m(a)a)^{p-1}m'(a)(-a^2)\\
&=&\left(\dfrac{\mu(u)\psi(u)}{\pi}\right)^{p-1}\psi(u)^2\dfrac{1}{4u\K(u)^2}\left(\dfrac{\E'(u)-u\K'(u)}{1-u}\right)^2\\
&=&\left(\dfrac{\mu(u)\psi(u)}{\pi}\right)^{p-1}\left(\dfrac{\E(u)-(1-u)\K(u)}{\sqrt{u}(1-u)\K(u)}\right)^2,
\end{eqnarray*}
which is strictly increasing in $u$ by Lemmas \ref{mylemmupsi} and \ref{mylemma}(8), and hence strictly decreasing in $x$ for each $p\geq1$. This implies that $f'(x)<0$ if $p\geq1$.

For the case of $p\leq-1$, we have
\begin{eqnarray*}
\left(\dfrac{M(x)}{x}\right)^{p-1}M'(x)&=&(m(a)a)^{p-1}m'(a)(-a^2)=-(m(a)a)^{p+1}m(a)^{-2}m'(a)\\
&=&\left(\dfrac{\mu(u)\psi(u)}{\pi}\right)^{p+1}\dfrac{1}{u\K'(u)^2}\left(\dfrac{\E'(u)-u\K'(u)}{1-u}\right)^2,
\end{eqnarray*}
which is strictly decreasing in $u$ by Lemmas \ref{mylemmupsi}, \ref{lemma}(2) and \ref{mylemma}(4), and hence strictly increasing in $x$ for each $p\leq-1$. Since $p$ is negative, this still implies that $f'(x)<0$.

It is easy to see that $f'(x)<0$ implies the inequalities in the part (2).
\end{proof}

\begin{openprob}
What is the exact domain of $p$ for which the function $\psi$ is $H_{p,p}$-convex (concave)? More generally, find the exact $(p,q)$ domain for which the function $\psi$ is $H_{p,q}$-convex (concave). The same questions can be asked for the modulus $\mathcal{M}(\Gamma_b)$.
\end{openprob}

\medskip

\subsection*{Acknowledgments}
The research of Matti Vuorinen was supported by the Academy of Finland, Project 2600066611.
Xiaohui Zhang is indebted to the CIMO (Grant TM-09-6629) and the Finnish National Graduate School of Mathematics and
its Applications for financial support. Both authors wish to thank \'Arp\'ad Baricz and the referee for their helpful comments on the manuscript.



\end{document}